\newcommand{\ignorer}[1]{}
\def\and{\ \wedge\ }
\newcommand{\revision}{}
\theoremstyle{plain}
\newtheorem{lemma}{Lemma}
\newtheorem{theorem}[lemma]{Theorem}
\newtheorem{proposition}[lemma]{Proposition}
\newtheorem{definition}[lemma]{Definition}
\theoremstyle{remark}
\newtheorem{remark}[lemma]{Remark}
\def\eps{\varepsilon}
\renewcommand\epsilon{\varepsilon}
\def\R{\mathbb{R}}
\def\P{\mathbb{P}}
\def\cH{\mathcal{H}}
\def\cP{\mathcal{P}}
\def\cL{\mathcal{L}}
\def\bbi{\overline{\bm i}}
\def\E{\mathbb{E}}
\def\Law{\mathcal{L}}
\def\Z{\mathbb{Z}}
\def\bM{\mathbb{M}}
\def\cF{\mathcal{F}}
\DeclareMathOperator{\Cat}{Cat}
\DeclareMathOperator{\IS}{IS}  %inverse size; functions on marked matchings
\DeclareMathOperator{\NC}{NC}  %non-crossing partitions
\newcommand{\Old}[1]{}
\newcommand\restr[2]{{% 
		\left.\kern-\nulldelimiterspace % 
		#1 % 
		\right|_{#2} % 
	}}
\DeclareMathOperator{\cc}{cc}
\newcommand\esper{\mathbb E}
\title[Meanders and noodles]{Components in meandric systems and the infinite noodle}
 \author[V. Féray]{Valentin Féray}
       \address[VF]{Université de Lorraine, CNRS, IECL, F-54000 Nancy, France}
       \email{valentin.feray@univ-lorraine.fr}
 \author[P. Thévenin]{Paul Thévenin}
 \address[PT]{Ångström Laboratory, Lägerhyddsvägen 1, Uppsala}
       \email{paul.thevenin@math.uu.se}
     \keywords{meanders, local limits, infinite noodle}
\subjclass[2020]{60C05}
\begin{document}

\begin{abstract}
We investigate here the asymptotic behaviour of a large typical meandric system. More precisely, we show the quenched local convergence of a random uniform meandric system $\bm M_n$ on $2n$ points, as $n \rightarrow \infty$, towards the infinite noodle introduced by Curien, Kozma, Sidoravicius and Tournier ({\em Ann. Inst. Henri Poincaré D}, {6}(2):221--238, 2019). As a consequence, denoting by $cc( \bm M_n)$ the number of connected components of $\bm M_n$, we prove the convergence in probability of $cc(\bm M_n)/n$ to some constant $\kappa$, answering a question raised independently
by Goulden--Nica--Puder
({\em Int. Math. Res. Not.}, 2020(4):983--1034, 2020)
and Kargin ({\em Journal of Statistical Physics}, 181(6):2322--2345, 2020). 
This result also provides information on the asymptotic geometry of the Hasse diagram of the lattice of non-crossing partitions. Finally, we obtain expressions of  the constant $\kappa$ as infinite sums over meanders, which allows us to compute upper and lower approximations of $\kappa$.
\end{abstract}

\maketitle

\section{Introduction}

\subsection{Background and main result}
We study in this paper {\em meandric systems} of given size $n$, 
which are collections of non-crossing loops intersecting the horizontal axis exactly at the points $0, \ldots, 2n-1$ (up to continuous deformation fixing the horizontal axis). See Fig. \ref{fig:meander} for a simulation of a uniform meandric system of size $60$.

\begin{figure}
\center
\begin{tabular}{c c}

\includegraphics[scale=.6]{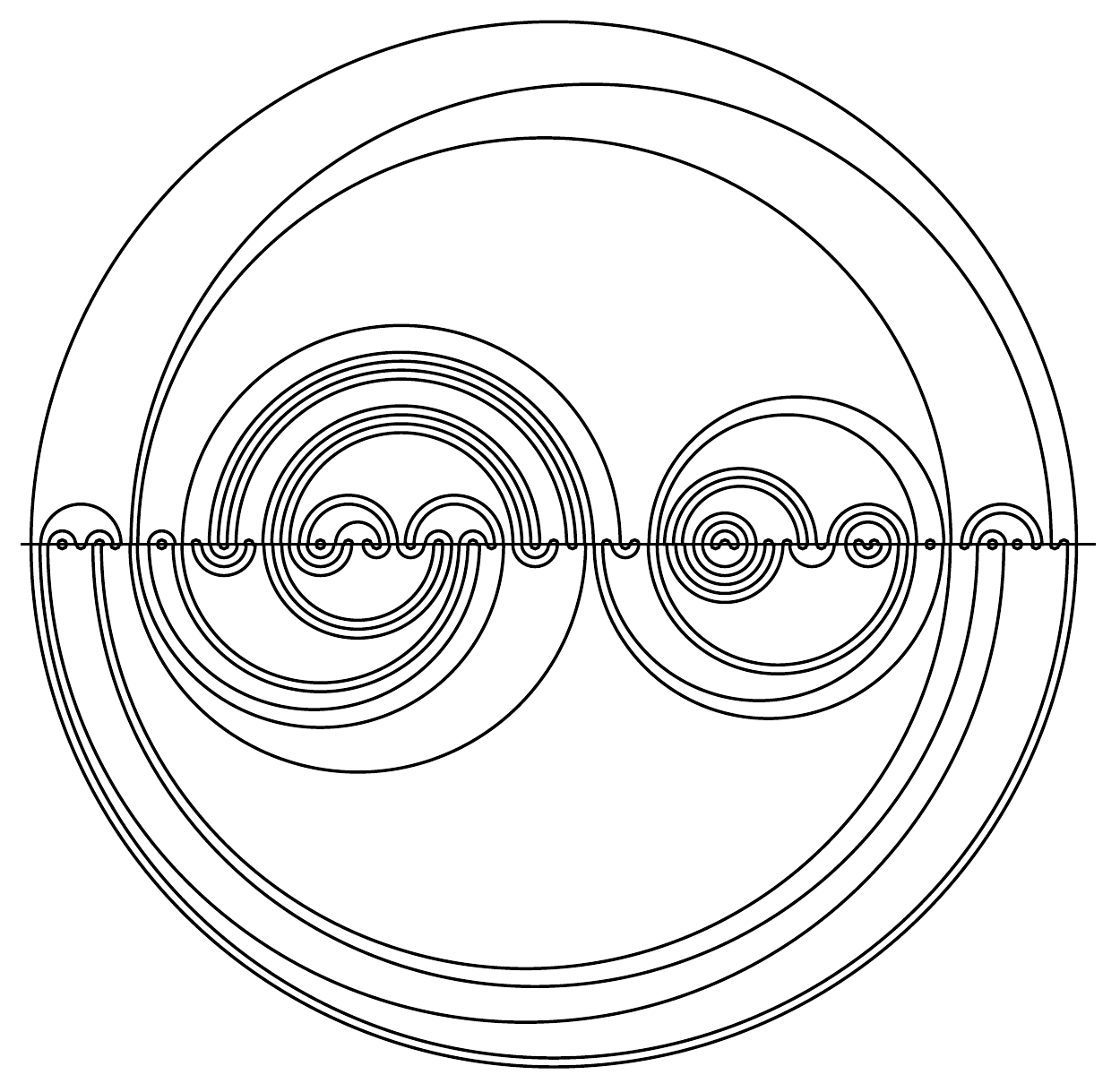}
&
\includegraphics[scale=.6]{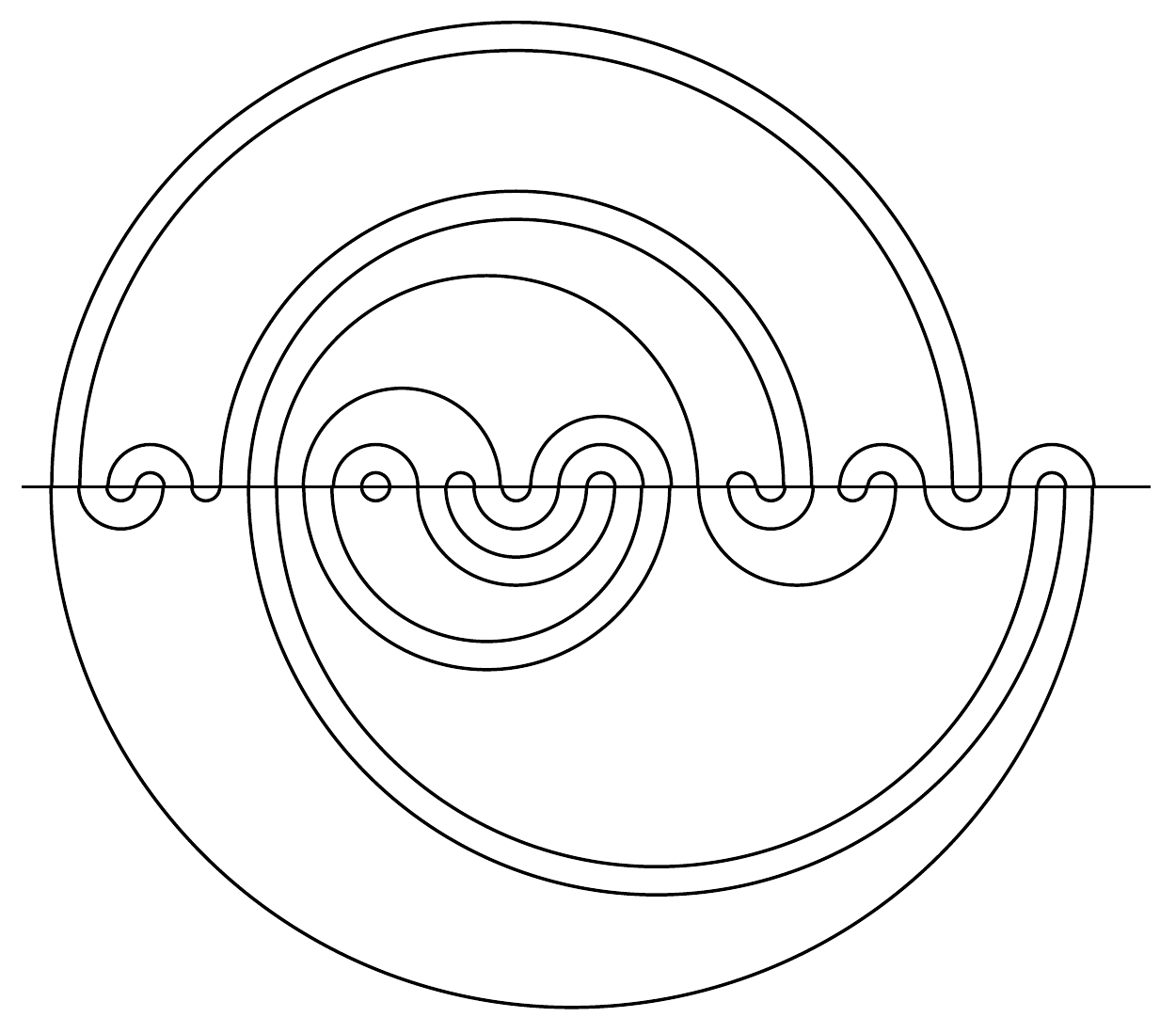}
\end{tabular}
\caption{Left: a uniform random meandric system of size $60$. Right: a uniform random meandric system of size $20$, with $4$ connected components.}
\label{fig:meander}
\end{figure}
 
A meandric system consisting of only one loop is called a {\em meander} (see Fig. \ref{fig:conme}).
Their study can be traced back to Poincaré, and they are connected to different domains of mathematics, theoretical physics or even biology, where they can be used as a model for polymer foldings \cite{DFGG97}.
\begin{figure}
\includegraphics[scale=.7]{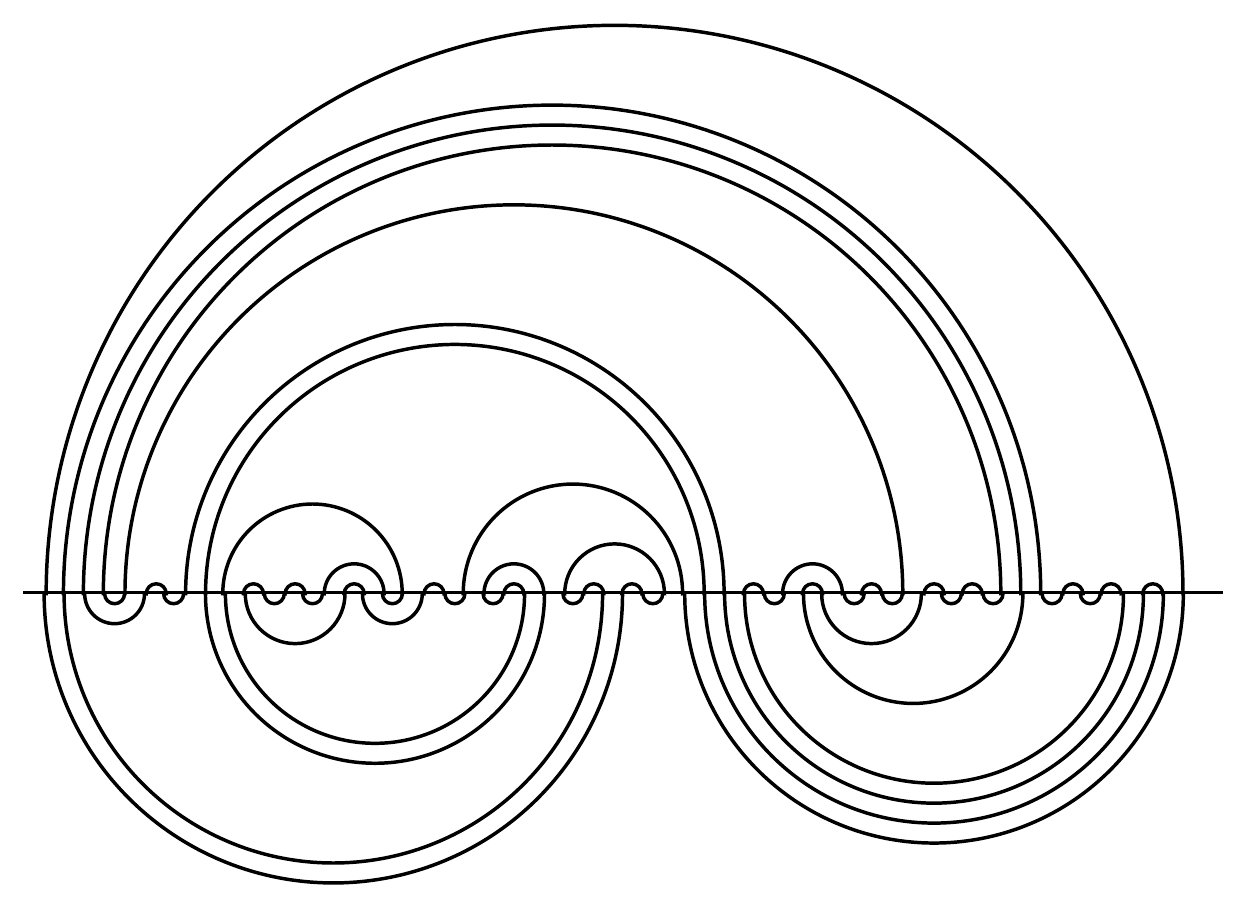}
\caption{A uniform random meander of size $30$.}
\label{fig:conme}
\end{figure}

Combinatorially, meandric systems can be uniquely represented 
as a pair of non-crossing pair-partitions ;
in particular there are $\Cat_n^2$ meandric systems of size $n$.
On the contrary,
the enumeration of meanders is a notoriously hard problem,
see \cite{zvonkine2021meanders} for a survey on the topic 
and \cite{difrancesco2000meanders} for a remarkable conjecture on the critical exponent. 
Subclasses of meandric systems or meanders have been also (at least asymptotically) enumerated, 
for example meandric systems with a large number of connected components \cite{FN19}
or meanders with a given number of minimal arcs \cite{DGZ20}.
This raises connections respectively with free probability theory (see also \cite{Nic16}) and 
with the work of Mirzakhani about enumeration of geodesics on surfaces \cite{Mir08}.

In this paper, we focus on another natural question: what is the number of connected components $\cc(\bm M_n)$ of a uniform random (unconditioned) meandric system  $\bm M_n$ of size $n$? This question has been raised recently, independently by Kargin~\cite{Kar20} and Goulden--Nica--Puder~\cite{GouldenMeandric}. 
Both sets of authors prove (through different methods) a linear lower bound for $\E[cc(\bm M_n)]$ and conjecture that the quotient $\E[\cc(\bm M_n)]/n$
converges to a constant. We show here a stronger version of this conjecture, proving the convergence in probability of $cc(\bm M_n)/n$ towards a constant.

\begin{theorem}\label{thm:cv_proba_ccMn}
Let $\bm M_n$ be a uniform random meandric system of size $n$.
Then there exists a constant $\kappa \in (0,1)$ such that $\cc(\bm M_n)/n \to \kappa$ in probability.
\end{theorem}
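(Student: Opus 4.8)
The strategy is to deduce the convergence of $\cc(\bm M_n)/n$ from a quenched local limit theorem: first establish that $\bm M_n$, viewed from a uniformly chosen root vertex, converges locally (in the Benjamini--Schramm sense, in probability over the randomness of $\bm M_n$) to the infinite noodle of Curien--Kozma--Sidoravicius--Tournier. Once this is in place, the number of components per vertex becomes a ``local'' functional up to a controlled error, and one concludes by a standard argument converting local convergence into convergence of normalized additive quantities.

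More concretely, I would proceed in the following steps. \textbf{Step 1 (local convergence).} Realize $\bm M_n$ as the superposition of two independent uniform non-crossing pair-partitions of $\{0,\dots,2n-1\}$, equivalently two independent uniform Dyck-type pairings, and show that around a typical point the pair of pairings converges to a pair of independent ``half-plane'' random pairings, whose superposition is precisely the infinite noodle. This should follow from known local limit results for uniform non-crossing pair-partitions (arcs of bounded length look like those coming from a critical Galton--Watson / Dyck picture, and long arcs escape to infinity), together with an independence-across-the-two-colours argument; the quenched (in-probability) upgrade from annealed convergence is obtained via a second-moment computation, i.e.\ by controlling the correlation between the neighbourhoods of two independently chosen roots, using the fact that two far-apart regions of a uniform non-crossing pairing are asymptotically independent. \textbf{Step 2 (components as a local functional).} Write $\cc(\bm M_n) = \sum_{v} \frac{1}{|C(v)|}$ where $C(v)$ is the loop through $v$ and the sum is over the $2n$ vertices; truncate at level $K$, i.e.\ compare $\cc(\bm M_n)$ with $\sum_v \frac{1}{|C(v)\wedge K|}$ or with $\frac1{2}\sum_v \mathbf 1[\text{$v$ is the left endpoint of its loop and the loop has $\le K$ vertices}]$, and bound the discrepancy by the number of vertices lying on ``long'' loops. \textbf{Step 3 (no mass on long loops, i.e.\ tightness).} Show that the expected proportion of vertices on loops of length $>K$ tends to $0$ as $K\to\infty$, uniformly in $n$; in the limit this is the statement that the noodle has only finite loops a.s.\ (a result from \cite{CKST}), and for finite $n$ one needs an a priori bound, e.g.\ via the first-moment estimates already available in \cite{Kar20, GouldenMeandric} or a direct combinatorial/coupling argument. \textbf{Step 4 (conclusion).} Combine: the truncated functional converges in probability to $\kappa_K := \E[\text{local quantity in the noodle}]$ by Step 1--2, $\kappa_K \to \kappa$ by Step 3, and the error terms vanish, giving $\cc(\bm M_n)/n \to \kappa$ in probability. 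Positivity $\kappa>0$ comes from the linear lower bounds of Kargin / Goulden--Nica--Puder (or directly from the positive density of length-one loops), and $\kappa<1$ from the fact that loops of length $\ge 2$ occur with positive density.

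The main obstacle is \textbf{Step 1}, specifically the quenched (in-probability) upgrade of the local limit and the identification of the limit as exactly the infinite noodle. The delicate point is that a uniform non-crossing pair-partition does \emph{not} have a simple Markovian spatial structure like a Galton--Watson tree, so establishing both the local limit and the asymptotic independence of two distant neighbourhoods requires careful combinatorial estimates on arcs ``jumping over'' a given region; handling the two colours simultaneously and showing their joint local law factorizes into the noodle's law is where the real work lies. Step 3 is also a genuine point of care, since a priori it is conceivable that a vanishing fraction of vertices sit on macroscopically large loops in a way that is invisible to the local limit; ruling this out requires a quantitative tail bound on loop lengths, which may be the technically heaviest estimate even if conceptually routine.
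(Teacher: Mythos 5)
Your overall architecture coincides with the paper's: encode $\bm M_n$ as a pair of independent uniform well-parenthesized words (equivalently, conditioned $\pm 1$ walks), prove quenched Benjamini--Schramm convergence to the infinite noodle via known concentration results for such conditioned walks plus a second-moment/decorrelation argument, and then transfer the convergence to $\cc(\bm M_n)/n = \frac{2}{2n}\sum_v 1/|C(v)|$. Steps 1, 2 and 4 are sound and match the paper; the Dyck-word encoding is precisely what dissolves your worry about the lack of Markovian structure, since the local limit of the conditioned walk is just an i.i.d.\ sequence of letters. (One technical point you gloss over, addressed in the paper's erratum, is that the \emph{same} uniform root is used in both words, so one needs a small swapping argument to reduce to the case of independent roots; your ``independence across the two colours'' remark is in the right spirit but this is where the actual work sits.)

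Step 3, however, as you state it, contains a genuine error. The claim that the infinite noodle a.s.\ has only finite loops is \emph{not} a result of Curien--Kozma--Sidoravicius--Tournier: they prove a zero--one law (either exactly one infinite component a.s., or none a.s.), and which alternative holds is an open problem, explicitly left open in the present paper as well. A uniform-in-$n$ bound on the proportion of vertices lying on loops of length $>K$ would, in the $n\to\infty$ limit, settle that open problem; nor does it follow from the linear lower bounds on $\E[\cc(\bm M_n)]$ of Kargin or Goulden--Nica--Puder, which are perfectly consistent with a positive proportion of vertices sitting on a single giant loop. Fortunately, the step is unnecessary: each term of $\sum_v 1/(|C(v)|\wedge K)$ differs from $1/|C(v)|$ by at most $1/K$, so the truncation error is deterministically at most $2n/K$, with no tail bound on loop lengths required. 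Equivalently, the paper observes that the functional $\IS(M,r)=1/|C_r(M)|$ is bounded and continuous at every \emph{complete} marked meandric system --- including those whose component through the root is infinite, where it takes the value $0$ --- checks that the noodle is a.s.\ complete (no half-infinite arcs, by recurrence of the simple random walk), and concludes via the continuous mapping theorem for random measures. With this correction your argument goes through and is essentially the paper's proof.
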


We note that Kargin~\cite{Kar20} further conjectures that $\cc(\bm M_n)$ is asymptotically normal with a variance linear in $n$, but we leave this problem open.
%In particular, this is stronger than the conjecture of \cite{GouldenMeandric}.

\subsection{Quenched local convergence to the infinite noodle}

A key step in our proof of \cref{thm:cv_proba_ccMn} is the identification of the 
local limit of the uniform meandric system $\bm M_n$ of size $n$.
This limit turns out to be the so-called {\em infinite noodle} $\bm M_\infty$,
introduced by Curien, Kozma, Sidoravicius and Tournier in \cite{InfiniteNoodle} (the term {\em infinite noodle} refers to a hypothetical infinite component of the system in \cite{InfiniteNoodle}, while we use here this denomination for the whole (multi)-graph on $\Z$, independently of the existence of an infinite component).
We recall here its construction. We first take two infinite sequences $(a_i)_{i \in \Z}$
and $(b_i)_{i \in \Z}$ of i.i.d.~balanced random variables in $\{L,R\}$.
Observe that there is a unique non-crossing
pair-partition of $\Z$ so that $i$ is the left end of an arc if and only if $a_i=L$.
We draw this non-crossing partition above the horizontal axis (\enquote{a} stands for \enquote{above}).
Similarly, we construct another non-crossing partition from the sequence $(b_i)_{i \in \Z}$
and draw it below the horizontal axis (\enquote{b} stands for \enquote{below}).
Taken together, the two matchings form a (multi-)graph on $\Z$; we are interested in its connected components.
 In particular it is proven in \cite{InfiniteNoodle} that, either there is exactly one infinite
 component a.s.,~or there is no infinite component a.s.
 The question of deciding which of these two statements holds is open.
\medskip

We now come back to the connection with meandric systems.
In \cref{sec:localcv}, a notion of local convergence for meandric systems is introduced.
The local convergence of a uniform meandric system $\bm M_n$ of size $n$
to the infinite noodle is rather intuitive, and proved in \cref{prop:CvToInfiniteNoodle} below,
based on some known results for conditioned random walks \cite{janson2016fringe, Bor21}.
This convergence holds in a strong sense, namely in the quenched setting
(see \cref{sec:localcv} for a definition).
 Roughly speaking, it says that, for any $k$ fixed, the $k$-neighbourhood of a uniform point 
 in a fixed typical meandric system $\bm M_n$ of size $n$ 
 converges in distribution to the $k$-neighbourhood of $0$ in $\bm M_\infty$. 
\medskip

As said above, this quenched local convergence result is central in our proof
of \cref{thm:cv_proba_ccMn}. In fact,
the constant $\kappa$ has a natural expression in terms of the infinite noodle:\begin{equation}\label{eq:kappa}
\kappa = \E \left[ \frac{2}{|C_0(\bm M_\infty)|} \right], 
\end{equation}
where $|C_0(\bm M_\infty)|$ is the size of the component containing $0$
in $\bm M_\infty$ (with the \revision{convention} $2/\infty=0$).
We note that it is not known whether this component is finite a.s.;
this is equivalent to the fact that the infinite noodle has no infinite component a.s., which, as we said before, is still an open question.

Based on this probabilistic interpretation,
in \cref{sec:formulas}, we give two formulas for $\kappa$,
as infinite sums over meanders of all sizes and closely related objects (open meanders).
Truncating these sums to finite index sets
yield rigourous lower and upper bounds for $\kappa$.
With the aid of a computer software, we can prove that $\kappa$
is in the interval $[0.207,0.292]$, improving on the bounds $0.17$ and $0.5$
given in \cite{GouldenMeandric} 
(computational experiments suggest that the true value should be around $0.23$, 
see again \cite{GouldenMeandric} or \cite{Kar20}).

\subsection{Asymptotic geometry of the Hasse diagram of the lattice of non-crossing partitions}
We now discuss a geometric consequence of our result,
regarding non-crossing partitions.
For brevity, we will not recall all definitions \revision{in the introduction}.
The reader unfamiliar with non-crossing partitions may look
at \revision{\cref{sec:appendix} or} \cite{GouldenMeandric} for the relevant definitions.
We denote by $\NC(n)$ the poset of non-crossing partitions of size $n$,
with the refinement order.
Considering the Hasse diagram (i.e.~the graph of covering relations)
 of this poset endows the set $\NC(n)$
with a graph structure; we will denote this graph by $\cH_n$.

It turns out that distances in this graph can be understood through meandric systems.
Indeed, in \cite{GouldenMeandric}, the authors construct a bijection $M$
from $\NC(n)^2$ onto the set of meandric systems of size $n$
such that, for all $\rho, \pi \in \NC(n)$:
\[d_{\cH_n}(\rho,\pi) = n- \cc\Big(M(\rho,\pi)\big)\Big).\]
We refer to \cref{sec:appendix} for a short informal description of the bijection.
Hence \cref{thm:cv_proba_ccMn} is equivalent to the following statement:
\begin{proposition}
\label{prop:limit_NCn}
For each $n$, let $\bm \rho_n$ and $\bm \pi_n$ be independent uniformly distributed elements in $\NC(n)$.
As $n$ tends to infinity, the quantity $\frac{1}{n}d_{\cH_n}(\bm \rho_n,\bm \pi_n)$ converges in probability to $1-\kappa$.
\end{proposition}
In other words, in the rescaled graph $(\cH_n, n^{-1} d_{\cH_n})$, asymptotically almost all pairs of points are at the same distance $1-\kappa$ from each other. 
We note that this kind of phenomena, where asymptotically all pairs of points
are at the same distance of each other, occurs in many other natural models
of (random) discrete measured spaces: for examples
in hypercubes of growing dimensions or
in models of random trees
of logarithmic height \cite[Sections 12 to 14]{janson2021tree_limits}.

\begin{remark}
  To understand further the asymptotic geometry of $\NC(n)$,
   a natural question is to study the distance to a fixed particular element,
   for example the partition $\bm 0_n$ in singletons (i.e.~the minimal element of $\NC(n)$).
Letting $\bm \rho_n$ a uniform random element in $\NC(n)$,
one can prove that  $\frac{1}{n}d_{\cH_n}(\bm \rho_n,\bm 0_n)$
converges to $1/2$ in probability.
Hence $\bm 0_n$ is not a typical element in $\NC(n)$.
The same holds replacing $\bm 0_n$ by the one-block partition $\bm 1_n$ by symmetry
(recalling that $\NC(n)$ is self-dual).
Here, we see a difference with the asymptotic geometry of 
random tree models with logarithmic height -- where the distance to the root
is typically half the distance between two random points --
and of hypercubes -- where the typical distance to the root is the same as between two uniform random vertices.
\end{remark}

\Old{
Let us discuss here implications of our results on the geometry of the so-called Hasse diagram of the non-crossing partition lattice.
 An integer $n$ being fixed, a partition of $n$ is a partition of the set $\{ 1, \ldots, n \}$ into a collection of nonempty blocks. A partition $\pi$ of $n$ is said to be non-crossing if we cannot find two blocks $V, W \in \pi$ and four integers $a<b<c<d$ such that $a,c \in V$ and $b,d \in W$. The set of non-crossing partitions of $n$, denoted by $NC(n)$, is endowed with a partial order $\prec$: for any $\pi, \rho \in NC(n)$, we say that $\pi \prec \rho$ if and only if, for any block $V \in \pi$, there exists a block $W \in \rho$ such that $V \subset W$. 

We now construct the Hasse diagram $\cH_n$ of $NC(n)$ as follows. The vertices of $\cH_n$ are the elements of $NC(n)$, and, for any $\pi, \rho \in NC(n)$ such that $\pi \neq \rho$ and $\pi \prec \rho$, we connect $\pi$ and $\rho$ in $\cH_n$ if and only if there is no $\theta \in NC(n) \backslash \{ \pi, \rho \}$ such that $\pi \prec \theta \prec \rho$. We endow $\cH_n$ with its graph distance $d_{\cH_n}$. An extensive study of the geometry of this Hasse diagram can be found in \cite{GouldenMeandric}. Using their results, we can deduce from Theorem \ref{thm:cv_proba_ccMn} the following:

\begin{proposition}
\label{prop:limit_NCn}
For each $n$, let $\bm \rho_n$ and $\bm \pi_n$ be independent uniformly distributed elements in $\NC(n)$.
As $n$ tends to infinity, the quantity $\frac{1}{n}d_{\cH_n}(\bm \rho_n,\bm \pi_n)$ converges in probability to $1-\kappa$.
\end{proposition}

In other words, in the rescaled graph $(\cH_n, n^{-1} d_{\cH_n})$, asymptotically almost all pairs of points are at the same distance $1-\kappa$ from each other. 

To see this, we use the following fact, proved in \cite[Theorem 4.4]{GouldenMeandric}: one can associate to each non-crossing partition $\rho \in \NC(n)$ a non-crossing matching $P(\rho)$ is such a way that, for any $\rho,\pi$ in $\NC(n)$, one has
\[d_{\cH_n}(\rho,\pi) = n- \cc\Big(M\big(P(\rho),P(\pi)\big)\Big),\]
where $d_{\cH_n}$ is the graph distance in $\cH_n$ and $M\big(P(\rho),P(\pi)\big)$ is the meandric system formed by the matchings $P(\rho)$ and $P(\pi)$ (notice that the number of connected components in this meandric system does not depend on which matching is the upper one). See Fig. \ref{fig:bijection} for an example.
If $\rho$ and $\pi$ are taken uniformly at random in $\NC(n)$,
then $M\big(P(\rho),P(\pi)\big)$ is a uniform random meandric system of size $n$.
Hence Proposition \ref{prop:limit_NCn} is a consequence of \cref{thm:cv_proba_ccMn}.

\begin{figure}
\begin{tabular}{c}
\begin{tikzpicture}
\draw (0,0) -- (8,0);
\draw (1,0) -- (1,3) -- (5,3) -- (7,3) -- (7,0) (5,0) -- (5,3) (2,0) -- (2,2) -- (4,2) -- (4,0) (3,0) -- (3,1) (6,0) -- (6,1);
\draw[blue,dotted] (.85,0) -- (.85,3.15) -- (7.15,3.15) -- (7.15,0) (1.15,0) -- (1.15,2.85) -- (4.85,2.85) --(4.85,0) (1.85,0) -- (1.85,2.15) -- (4.15,2.15) -- (4.15,0) (2.15,0) -- (2.15,1.85) -- (3.85,1.85) -- (3.85,0) (2.85,0) -- (2.85,1.15) -- (3.15,1.15) -- (3.15,0) (5.15,0) -- (5.15,2.85) -- (6.85,2.85) --(6.85,0) (5.85,0) -- (5.85,1.15) -- (6.15,1.15) -- (6.15,0);
\draw[fill=red] (3,2) circle (.1);
\draw[fill=red] (3,1) circle (.1);
\draw[fill=red] (6,1) circle (.1);
\draw[fill=red] (5,3) circle (.1);
\draw[fill=black] (1,0) circle (.05);
\draw[fill=black] (2,0) circle (.05);
\draw[fill=black] (3,0) circle (.05);
\draw[fill=black] (4,0) circle (.05);
\draw[fill=black] (5,0) circle (.05);
\draw[fill=black] (6,0) circle (.05);
\draw[fill=black] (7,0) circle (.05);
\end{tikzpicture}
\\
\begin{tikzpicture}
\draw[white] (0,0) -- (0,1);
\end{tikzpicture}
\\
\begin{tikzpicture}[scale=.5]
\draw (0,0) -- (15,0);
\draw (14,0) arc (0:180:6.5); 
\draw (13,0) arc (0:180:1.5); 
\draw (12,0) arc (0:180:.5); 
\draw (9,0) arc (0:180:3.5); 
\draw (8,0) arc (0:180:2.5); 
\draw (7,0) arc (0:180:1.5); 
\draw (6,0) arc (0:180:.5); 

\draw[fill=black] (1,0) circle (.05);
\draw[fill=black] (2,0) circle (.05);
\draw[fill=black] (3,0) circle (.05);
\draw[fill=black] (4,0) circle (.05);
\draw[fill=black] (5,0) circle (.05);
\draw[fill=black] (6,0) circle (.05);
\draw[fill=black] (7,0) circle (.05);
\draw[fill=black] (8,0) circle (.05);
\draw[fill=black] (9,0) circle (.05);
\draw[fill=black] (10,0) circle (.05);
\draw[fill=black] (11,0) circle (.05);
\draw[fill=black] (12,0) circle (.05);
\draw[fill=black] (13,0) circle (.05);
\draw[fill=black] (14,0) circle (.05);
\end{tikzpicture}
\end{tabular}
\caption{The non-crossing matching on $\llbracket 1, 14 \rrbracket$ associated to the non-crossing partition $(1,5,7) (2,4) (3) (6)$.}
\label{fig:bijection}
\end{figure}

We can also slightly complete this result, by considering the two special points $0_n, 1_n \in NC(n)$, where $0_n$ is the partition with $n$ blocks of size $1$ and $1_n$ is the partition with one block of size $n$. Indeed, $0_n$ and $1_n$ are the respective minimal and maximal elements of $NC(n)$ with respect to $\prec$ (that is, for all $\pi \in NC(n), 0_n \prec \pi \prec 1_n$).

\begin{proposition}
\label{prop:limit_NCn_Pointe}
Let $\rho_n$ be a uniform element of $NC(n)$. Then we have the following convergence in probability:
\begin{align*}
\frac{d_{\cH_n}(0_n, \rho_n)}{n} \underset{n \rightarrow \infty}{\overset{\P}{\rightarrow}} \frac{1}{2}.
\end{align*}
and
\begin{align*}
\frac{d_{\cH_n}(1_n, \rho_n)}{n} \underset{n \rightarrow \infty}{\overset{\P}{\rightarrow}} \frac{1}{2}.
\end{align*}
\end{proposition}
}
%\begin{proof}
%\paul{est-ce qu'il existe une référence ? Sinon on l'avait montré, y a aussi une preuve avec les méandres.}
%\end{proof}

%\paul{est-ce qu'on garde la fin avec les autres mm spaces ?}
%
%
%We remark that many models of (random) mm-spaces
% have the same kind of asymptotic behaviour as $\widetilde{\NC(n)}$,
%meaning that after renormalization by the logarithm of the number of vertices,
%the distance between two random points converge in probability to a deterministic constant.
%We can cite:
%\begin{itemize}
%\item the hypercube $\{0,1\}^d$ with $L^1$ distance and the $d$-dimensional sphere, as $d$ tends to infinity;
%\item models of random trees, such as random binary search trees, recursive trees, preferential attachment trees, (generalized) split trees, \dots, see \cite[Sections 12, 13 and 14]{janson2021tree_limits};
%\item \comment{sparse Erd\H os-Rényi random graph in the supercritical regime, i.e. $G(n,\lambda/n)$ for $\lambda>1$ ?
%It has log diameter but I could not find in the literature what is the typical distance between two points.}
%\end{itemize}

\subsection{Combinatorial definition of meandric systems}
A matching of size $n$ is a partition of the set $\{0,\dots,2n-1\}$
into sets of size $2$. 
It is customary to represent a matching $P$ as a set of arcs all on the same side of a horizontal line,
with an arc connecting $i$ to $j$ for each pair $\{i,j\}$ in $P$.
A matching is non-crossing if the corresponding set of arcs does not contain any crossing,
i.e.~if there does not exist two pairs $\{i,k\}$ and $\{j,\ell\}$ in $P$, with $i<j<k<\ell$.
It is well-known that there are $\Cat_n$ matchings of size $n$,
where $\Cat_n:=\frac{1}{n+1} \binom{2n}{n}$  is the $n$-th Catalan number.
 
It is easy to see that
a meandric system $M$ of size $n$ can be uniquely represented as
 a pair of non-crossing matchings of the same size,
drawn respectively above and below the same horizontal line.
Consequently, there are exactly $\Cat_n^2$ meandric systems of size $n$.
Considering both non-crossing matchings together gives a (multi-)graph structure on $\{0,\dots,2n-1\}$
and we denote by $\cc(M)$ the number of connected components of this graph.
It corresponds to the number of loops when we see the meandric system
as a collection of non-crossing loops intersecting the horizontal axis exactly
at positions $0, \dots, 2n-1$.

\subsection*{Organization of the paper}

We show in Section \ref{sec:localcv} the local convergence of a random uniform meandric system to the infinite noodle (Proposition \ref{prop:CvToInfiniteNoodle}). We then use this result to prove Theorem \ref{thm:cv_proba_ccMn} in Section \ref{sec:mainproof}. Section \ref{sec:formulas} is devoted to the proof of two ways of writing the constant $\kappa$ of Theorem \ref{thm:cv_proba_ccMn}, whose computation provides upper and lower bounds for its value.
\revision{Finally, \cref{sec:appendix} provides definitions used in \cref{prop:limit_NCn}.}

\subsection*{Notation}

In the whole paper, $\overset{(d)}{\rightarrow}$ will denote the convergence in distribution of a sequence of random variables, and $\overset{\P}{\rightarrow}$ the convergence in probability.

\section{Quenched local convergence of a uniform meandric system}
\label{sec:localcv}

The goal of this section is to prove the convergence of a uniform meandric system of size $n$, in the so-called \textit{quenched Benjamini--Schramm sense}, towards the infinite noodle introduced by Curien, Kozma, Sidoravicius and Tournier \cite{InfiniteNoodle}. The proof of this result consists in encoding a meandric system by conditioned random paths, for which results of local convergence have been established \cite{janson2016fringe, Bor21}.

\subsection{Local topology on meandric systems}

Let us define a local topology on meandric systems with a marked point.
To this end, we introduce the notion of partial matchings and partial meandric systems.

Let $A$ be an integer interval (finite or infinite). A partial matching on $A$ is a partition
of $A$ into pairs and singletons, where singletons are decorated with either $L$ or $R$.
We represent partial matchings with a set of arcs, where pairs are treated as above
and a singleton $\{i\}$ decorated with $L$ (resp.~$R$) is represented by a dashed unbounded open arc,
whose left end (resp.~right end) is $i$.
A partial matching is \textit{non-crossing} if it does not contain any of the following configurations (see Fig. \ref{fig:forbiddenconfigurations}):
 \begin{itemize}
 \item two pairs $\{i,k\}$ and $\{j,\ell\}$,
with $i<j<k<\ell$;
\item a pair $\{i,k\}$ and a singleton $\{j\}$ with $i<j<k$ (regardless of the decoration of $\{j\}$);
\item two singletons $\{i\}$ and $\{j\}$ with $i<j$ where $i$ is decorated with $L$ and $j$ with $R$.
 \end{itemize}

\begin{figure}
\[
\begin{tikzpicture}[scale=.7]
\draw (0,0) -- (6,0);
\draw (1,.1) -- (1,-.1) (2,.1) -- (2,-.1) (3,.1) -- (3,-.1) (4,.1) -- (4,-.1) (5,.1) -- (5,-.1);
\draw (1,-.4) node{$1$}; 
\draw (2,-.4) node{$2$}; 
\draw (3,-.4) node{$3$}; 
\draw (4,-.4) node{$4$}; 
\draw (5,-.4) node{$5$}; 
\draw (3,0) arc (0:180:1);
\draw (4,0) arc (0:180:1);
\draw (5,.5) node{$R$};
\draw[dashed] (0,2.5) to [bend left=25] (5,0);
\end{tikzpicture} 
\quad
\begin{tikzpicture}[scale=.7]
\draw (0,0) -- (6,0);
\draw (1,.1) -- (1,-.1) (2,.1) -- (2,-.1) (3,.1) -- (3,-.1) (4,.1) -- (4,-.1) (5,.1) -- (5,-.1);
\draw (1,-.4) node{$1$}; 
\draw (2,-.4) node{$2$}; 
\draw (3,-.4) node{$3$}; 
\draw (4,-.4) node{$4$}; 
\draw (5,-.4) node{$5$}; 
\draw (4,0) arc (0:180:1);
\draw (1,.5) node{$R$};
\draw[dashed] (0,1) to [bend left=25] (1,0);
\draw (3,.5) node{$R$};
\draw[dashed] (0,1.7) to [bend left=25] (3,0);
\draw (5,.5) node{$R$};
\draw[dashed] (0,2.5) to [bend left=25] (5,0);
\end{tikzpicture} 
\quad
\begin{tikzpicture}[scale=.7]
\draw[white] (0,2) -- (3,2);

\draw (0,0) -- (6,0);
\draw (1,.1) -- (1,-.1) (2,.1) -- (2,-.1) (3,.1) -- (3,-.1) (4,.1) -- (4,-.1) (5,.1) -- (5,-.1);
\draw (1,-.4) node{$1$}; 
\draw (2,-.4) node{$2$}; 
\draw (3,-.4) node{$3$}; 
\draw (4,-.4) node{$4$}; 
\draw (5,-.4) node{$5$}; 
\draw (3,0) arc (0:180:.5);
\draw (1,.5) node{$L$};
\draw[dashed] (1,0) to [bend left=25] (6,2.5);
\draw (4,.5) node{$R$};
\draw[dashed] (0,2) to [bend left=25] (4,0);
\draw (5,.5) node{$L$};
\draw[dashed] (5,0) to [bend left=25] (6,1);
\end{tikzpicture} 
\]
\caption{The three forbidden configurations in a non-crossing partial matching of the interval $\llbracket 1,5 \rrbracket$.}
\label{fig:forbiddenconfigurations}
\end{figure}
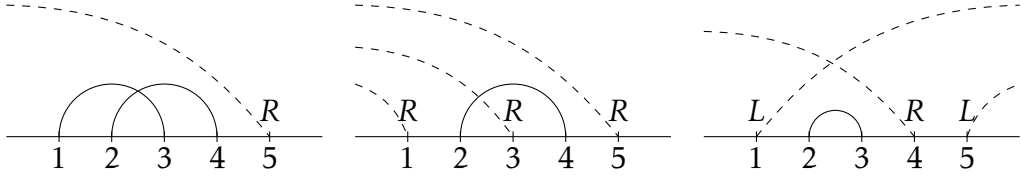

 A partial meandric system is then a pair of two non-crossing partial matchings of the same interval,
 drawn respectively above and below the same horizontal line.
 Components in partial meandric systems are defined similarly as in meandric systems.
We will sometimes refer to meandric systems as {\em complete} meandric systems
to emphasize the opposition to partial ones.

If $B$ is a subinterval of $A$ and $P$ a partial matching on $A$,
there is a natural notion of restriction $P/B$.
Namely, we keep only pairs and singletons of $P$ containing elements in $B$.
If a pair $\{i,i'\}$ of $P$ contains one element of $B$ and one not in $B$,
say $i$ is in $B$ but not $i'$, then $P/B$ contains $\{i\}$ as a singleton,
decorated either with $L$ (if $i' >i$) or with $R$ (if $i'<i$). See Fig. \ref{fig:restrictionmatching}.
Subsequently, we define the restriction $M/B$ of a partial meandric system $M$ on $A$
by taking the restrictions to $B$ of the two partial matchings of $M$.

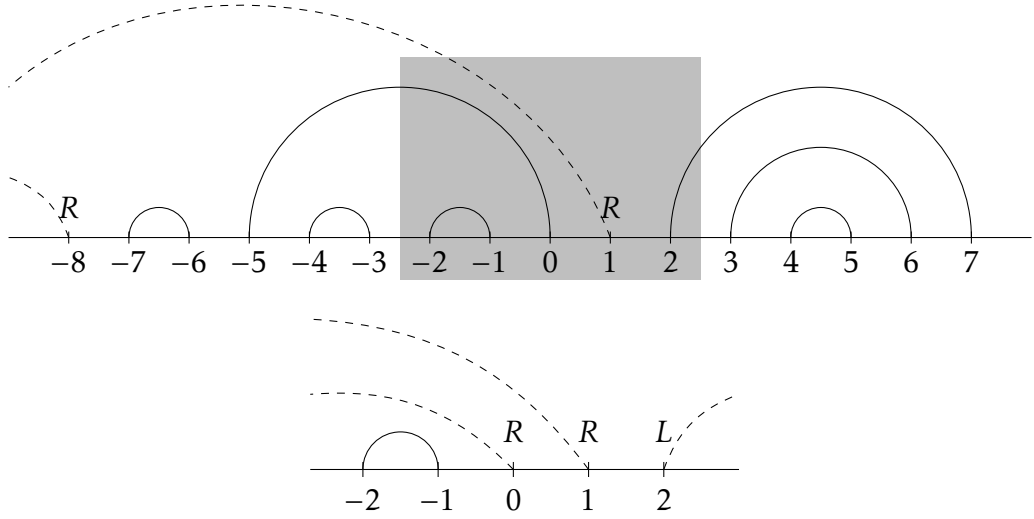
\begin{figure}
\begin{tabular}{c}
\begin{tikzpicture}[scale=.8]
\fill [lightgray] (6.5,3) rectangle (11.5,-.7);
\draw (0,0) -- (17,0);
\draw (1,.1) -- (1,-.1) (2,.1) -- (2,-.1) (3,.1) -- (3,-.1) (4,.1) -- (4,-.1) (5,.1) -- (5,-.1) (6,.1) -- (6,-.1) (7,.1) -- (7,-.1) (8,.1) -- (8,-.1) (9,.1) -- (9,-.1) (10,.1) -- (10,-.1) (11,.1) -- (11,-.1) (12,.1) -- (12,-.1) (13,.1) -- (13,-.1) (14,.1) -- (14,-.1) (15,.1) -- (15,-.1) (16,.1) -- (16,-.1);
\draw (1,-.4) node{$-8$}; 
\draw (2,-.4) node{$-7$}; 
\draw (3,-.4) node{$-6$}; 
\draw (4,-.4) node{$-5$}; 
\draw (5,-.4) node{$-4$}; 
\draw (6,-.4) node{$-3$}; 
\draw (7,-.4) node{$-2$}; 
\draw (8,-.4) node{$-1$}; 
\draw (9,-.4) node{$0$}; 
\draw (10,-.4) node{$1$}; 
\draw (11,-.4) node{$2$}; 
\draw (12,-.4) node{$3$}; 
\draw (13,-.4) node{$4$}; 
\draw (14,-.4) node{$5$}; 
\draw (15,-.4) node{$6$}; 
\draw (16,-.4) node{$7$}; 
\draw (1,.5) node{$R$};
\draw[dashed] (1,0) to [bend right=25] (0,1);
\draw (3,0) arc (0:180:.5);
\draw (6,0) arc(0:180:.5);
\draw (8,0) arc(0:180:.5);
\draw (9,0) arc(0:180:2.5);
\draw (10,.5) node{$R$};
\draw[dashed] (10,0) to [bend right=55] (0,2.5);
\draw (16,0) arc(0:180:2.5);
\draw (15,0) arc(0:180:1.5);
\draw (14,0) arc(0:180:.5);
\end{tikzpicture}
\\
\begin{tikzpicture}
\draw[white] (0,0) -- (0,3);
\end{tikzpicture}

\begin{tikzpicture}
\draw (0.3,0) -- (6,0);
\draw (1,.1) -- (1,-.1) (2,.1) -- (2,-.1) (3,.1) -- (3,-.1) (4,.1) -- (4,-.1) (5,.1) -- (5,-.1);
\draw (1,-.4) node{$-2$}; 
\draw (2,-.4) node{$-1$}; 
\draw (3,-.4) node{$0$}; 
\draw (4,-.4) node{$1$}; 
\draw (5,-.4) node{$2$}; 
\draw (2,0) arc (0:180:.5);
\draw (3,.5) node{$R$};
\draw[dashed] (3,0) to [bend right=25] (0.3,1);
\draw (4,.5) node{$R$};
\draw[dashed] (4,0) to [bend right=25] (0.3,2);
\draw (5,.5) node{$L$};
\draw[dashed] (5,0) to [bend left=25] (6,1);
\end{tikzpicture}
\end{tabular}
\caption{A partial matching on $\llbracket -8,7 \rrbracket$ and its restriction to $\llbracket -2,2 \rrbracket$.}
\label{fig:restrictionmatching}
\end{figure}

A marked (partial) meandric system  is a pair $(M,r)$,
where $M$ is a (partial) meandric system of some integer interval $A$ (possibly infinite)
and $r$ an element of $A$. We denote by $\bM_{part}$ the set of marked partial meandric systems.
Two marked partial meandric systems $(M_1,r_1)$ and $(M_2,r_2)$ 
are equivalent if they differ by a shift: we write $(M_1,r_1) \equiv (M_2,r_2)$.
Furthermore we define a distance between two marked partial meandric systems as follows
\[d\big( (M_1,r_1), (M_2,r_2) \big) = \frac{1}{1+\max\{k: M_1/[r_1-k;r_1+k] \equiv M_2/[r_2-k;r_2+k] \} },\]
with the convention that $d\big( (M_1,r_1), (M_2,r_2) \big)=0$ if they are equivalent and $1$ if $M_1 / \{r_1\} \neq M_2 / \{ r_2\}$.
With this distance, marked partial meandric systems, up to equivalence, form a compact Polish space.

\subsection{Benjamini--Schramm convergence}

In graph theory, we say that a (deterministic) sequence of graphs $(G_n)_{n \geq 1}$ converges in the Benjamini--Schramm sense \revision{- also known as local weak convergence -} if the sequence of random rooted graphs $(G_n,v_n)$ converges for the local topology,
where for each $n$, $v_n$ is a uniform random vertex of $G_n$.
This notion has first appeared in \cite{BS01} and has been largely used since then.
When the sequence of graphs $(G_n)$ is random, there are two natural extensions.
\begin{itemize}
\item Either we consider the pair $(G_n,v_n)$, where $v_n$ is a uniform random vertex of $G_n$, as a random rooted graph and we look at its limit in distribution.
In this case, we speak of {\em annealed Benjamini--Schramm convergence}.
\item Or we consider the conditional law $\mathcal L((G_n,v_n)|G_n)$,
which is a random measure on the set of rooted graphs.
If this random measure converges weakly \revision{ in distribution}, we speak of {\em quenched Benjamini--Schramm convergence}.
\end{itemize}
We will borrow this terminology in the setting of matchings and define a notion of quenched Benjamini--Schramm convergence. 
In the latter, we use the notation $\Law$ for the (conditional) law of a random variable.
\begin{definition}
A sequence of random partial meandric systems $(\bm M_n)_{n \geq 1}$ converges 
in the quenched Benjamini--Schramm sense 
if, letting $\bm i_n$ be a uniform random element of $\bm M_n$ \revision{(chosen independently of $\bm M_n$)}, we have \revision{ in distribution:} 
\[\Law\big((\bm M_n,\bm i_n) \, |\, \bm M_n\big) \overset{weakly}{\underset{n \rightarrow \infty}{\rightarrow}} \mu,\]
where $\mu$ is a random measure 
on the set of marked partial meandric systems.
Equivalently, it means that, for any bounded continuous function 
$F: (\bM_{part}, d) \rightarrow \R_+$, we have
\[ \E \left[ F(\bm M_n, \revision{\bm i_n}) | \bm M_n \right] \overset{(d)}{\underset{n \rightarrow \infty}{\rightarrow}} \int_{\bM_{part}} F(M,r) d\mu(M,r).\]
\end{definition}
To avoid dealing with random measures, we use mostly the second definition.
Remark moreover that, in the case of interest for us, the limiting measure is a.s.~equal to
the law of the infinite noodle, so that for all $F$:
\[ \int_{\bM_{part}} F(M,r) d\mu(M,r) = \E[ F(\bm M_\infty, 0) ] \]
is a deterministic quantity.

\subsection{Convergence of a uniform meandric system}

We prove here the following result, showing the quenched Benjamini--Schramm convergence 
of a uniform meandric system of size $n$ towards the infinite noodle $\bm M_\infty$.

\begin{proposition}
\label{prop:CvToInfiniteNoodle}
A uniform random meandric system $\bm M_n$ of size $n$ converges 
in the quenched Benjamini--Schramm sense 
to the law of the infinite noodle $(\bm M_\infty,0)$,
i.e.~for any bounded continuous function 
$F: (\bM_{part}, d) \rightarrow \R_+$, one has
\begin{align*}
\E \left[ F(\bm M_n, \revision{\bm i_n}) | \bm M_n \right] \overset{\P}{\underset{n \rightarrow \infty}{\rightarrow}} \E [ F(\bm M_\infty, 0) ].
\end{align*}
\end{proposition}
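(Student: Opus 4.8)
The plan is to prove the stated quenched convergence via the standard second-moment criterion for Benjamini--Schramm-type limits: it suffices to show that, for every fixed bounded continuous $F : (\bM_{part},d) \to \R_+$, the conditional expectation $\E[F(\bm M_n, \bm i_n) \mid \bm M_n]$ converges to the constant $\E[F(\bm M_\infty,0)]$ in $L^2$, which reduces to the two convergences
\begin{align}
\E\big[F(\bm M_n,\bm i_n)\big] &\longrightarrow \E[F(\bm M_\infty,0)], \label{eq:firstmoment}\\
\E\big[F(\bm M_n,\bm i_n)\,F(\bm M_n,\bm j_n)\big] &\longrightarrow \E[F(\bm M_\infty,0)]^2, \label{eq:secondmoment}
\end{align}
where $\bm i_n,\bm j_n$ are two independent uniform marks in $\{0,\dots,2n-1\}$. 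Since $d$ generates a topology under which a bounded continuous $F$ depends, up to arbitrarily small error, only on the isomorphism class of a bounded neighbourhood $M/[r-k;r+k]$ of the mark, it is enough to prove both displays when $F$ is the indicator that this $k$-neighbourhood is equivalent to a fixed finite marked partial meandric system; I will phrase everything in terms of such local events.

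The first step is \eqref{eq:firstmoment}, the \emph{annealed} convergence, which is essentially the content of the ``rather intuitive'' remark in the introduction. I would encode a uniform non-crossing matching of size $n$ by its classical lattice-path / Łukasiewicz-type encoding (each position carries a step recording whether it is a left end, right end, and the depth of the arc), so that a uniform meandric system becomes a pair of independent uniform such conditioned paths. The local structure of the two matchings near a uniform position $\bm i_n$ is then governed by the local limit of these conditioned random walks around a uniform time, which is exactly what the cited results \cite{janson2016fringe, Bor21} provide: the arc incidences in a window $[\bm i_n-k,\bm i_n+k]$ converge jointly to those produced by an i.i.d.\ balanced $\{L,R\}$ sequence above and an independent one below, i.e.\ to the law of $\bm M_\infty$ restricted to $[-k,k]$. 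One must check that no arc straddling the window but with both endpoints far outside creates an identification inside the window in the limit — this is where one uses that a uniform random walk bridge of length $2n$ conditioned to stay nonnegative does not come back near $0$ within a bounded window around a typical time, so that the ``depth'' at $\bm i_n$ tends to infinity and nested arcs decouple. This yields \eqref{eq:firstmoment}.

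The second step, \eqref{eq:secondmoment}, upgrades annealed to quenched and is the main obstacle. The point is that two independent uniform marks $\bm i_n,\bm j_n$ are, with probability $1-o(1)$, at distance $\gg 2k$ apart on the axis, and then the events ``$k$-neighbourhood of $\bm i_n$ looks like $A$'' and ``$k$-neighbourhood of $\bm j_n$ looks like $B$'' become asymptotically independent — again because in the conditioned-path encoding the path increments in two far-apart bounded windows are asymptotically independent (the bridge/meander measure is ``locally i.i.d.'' away from the endpoints), and because with high probability no single arc of either matching has one endpoint in the window of $\bm i_n$ and the other in the window of $\bm j_n$ (the number of arcs of length comparable to $n$ passing over a fixed point is tight, and the probability that such an arc lands both ends in two prescribed bounded windows is $O(1/n)$). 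Combining this asymptotic independence with the first-moment convergence gives $\E[F(\bm M_n,\bm i_n)F(\bm M_n,\bm j_n)] \to \E[F(\bm M_\infty,0)]^2$, hence the $L^2$, and therefore in-probability, convergence claimed in the proposition. I would carry out the argument first for indicator $F$ of a cylinder event and then pass to general bounded continuous $F$ by the usual approximation, noting that $\bM_{part}$ is compact Polish so that weak convergence of the conditional laws is equivalent to convergence of $\E[F \mid \bm M_n]$ for all such $F$.
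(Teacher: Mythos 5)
Your overall architecture matches the paper's: reduce to cylinder (local) functions by uniform approximation on the compact space, encode the meandric system as a pair of independent uniform well-parenthesized words, get the annealed limit from the local behaviour of conditioned walks \cite{janson2016fringe,Bor21}, and upgrade to quenched convergence by a second-moment computation with two independent marks. This is exactly the strategy the paper follows (Proposition \ref{prop:LocalCvDyckpath} plus the second-moment argument in the erratum).

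The gap is in the step you dispose of parenthetically: that \enquote{the bridge/meander measure is locally i.i.d.\ away from the endpoints.} Because the upper and lower words share the \emph{same} mark $\bm i_n$ (and, in the second moment, the same pair $\bm i_n,\bm j_n$), both of your displayed limits reduce to sums of the form $\frac{1}{(2n)^2}\sum_{i,j} a^{(n)}_{ij}b^{(n)}_{ij}$ where $a^{(n)}_{ij}=\E[f^a(\bm w^a_n,i)f^a(\bm w^a_n,j)]$ and similarly for $b$. The averaged convergences $\frac{1}{(2n)^2}\sum_{i,j}a^{(n)}_{ij}\to c_a^2$ and $\frac{1}{(2n)^2}\sum_{i,j}b^{(n)}_{ij}\to c_b^2$ — which is all that the cited pattern-count results of Janson and Borga give you directly — do \emph{not} imply $\frac{1}{(2n)^2}\sum_{i,j}a^{(n)}_{ij}b^{(n)}_{ij}\to c_a^2c_b^2$ (one can make the sums correlate on a positive fraction of pairs). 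What you actually need is that $a^{(n)}_{ij}\to c_a^2$ for \emph{most individual bulk pairs} $(i,j)$, i.e.\ a position-uniform decorrelation statement for two bounded windows of a single uniform Dyck path, and likewise a position-uniform one-window statement for the first moment. This is precisely the point at which the originally published proof was incomplete; the paper fills it with a \enquote{swapping} lemma: the map exchanging the radius-$k$ neighbourhoods of two bulk positions is injective and lands in the set of well-parenthesized words with probability tending to $1$ (because the Dyck path height in the bulk exceeds $2k$ w.h.p., by the Brownian excursion scaling limit), which yields the total-variation homogeneity of window laws uniformly over bulk positions, hence the required factorization. You need to supply an argument of this kind, or cite a pointwise (fixed $i/n\to t$) local limit theorem for the conditioned walk rather than an averaged one. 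Separately, your concern about arcs straddling the window with both endpoints far outside is moot: the topology on $\bM_{part}$ is defined via the restriction $M/[r-k,r+k]$, which records only pairs and decorated singletons meeting the window, so such arcs simply do not appear in the neighbourhood; the real enemy is the correlation induced by the global well-parenthesized conditioning, which your sketch does not address.
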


Since the right-hand side is a constant, observe that it is equivalent to the convergence in distribution towards the same quantity.
The proof is based on an straightforward encoding of meandric systems,
as pairs of well-parenthesized words, which we now describe.

Let $P$ be a partial non-crossing matching on an integer interval $A$.
We associate to it a function $w:A \to \{L,R\}$ as follows:
$w(i)=L$ if and only if $\{i\}$ is a singleton of $P$ decorated with $L$ or if $i$ is the smallest element in its pair. Such functions will be called {\em words} in the sequel.
The following lemma is elementary.
\begin{lemma}
This function $w$ defines a bijection from partial non-crossing matchings on $A$ to $\{L,R\}^A$,
compatible with restriction.
Moreover, for a finite interval $A$, a partial non-crossing matching is complete (i.e.~does not contain singletons)
if and only if the associated word is a well-parenthesized word (i.e.~it has as many $L$'s as $R$'s
and all its prefixes have at least as many $L$'s as $R$'s).
\end{lemma}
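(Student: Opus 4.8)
The plan is to prove the bijection by exhibiting an explicit inverse, reading a word $w \in \{L,R\}^A$ as a sequence of parentheses ($L$ opening, $R$ closing) and matching them greedily. Concretely, define a map $\Phi$ on words by scanning the elements of $A$ in increasing order while maintaining a stack of ``currently open'' positions: on reading $i$ with $w(i)=L$, push $i$; on reading $i$ with $w(i)=R$, pop the top of the stack and pair it with $i$ when the stack is nonempty, and otherwise declare $\{i\}$ an $R$-singleton; and once the scan terminates, declare every position still on the stack an $L$-singleton. (For infinite $A$ the same recipe assigns each position a well-defined fate, even if the stack never stabilizes.) The whole statement will follow once we show that $\Phi$ takes values in non-crossing partial matchings and is a two-sided inverse of $P \mapsto w$, since the restriction and completeness assertions are then read off from the dynamics of the stack.

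First I would check that $\Phi(w)$ is non-crossing by excluding each of the three forbidden configurations from the stack dynamics. A closing position always pairs with the most recently pushed still-open position; since an $L$-singleton and the smaller element of a not-yet-closed pair both remain on the stack, a pair $\{i,k\}$ of $\Phi(w)$ cannot enclose an $L$-singleton or the left end of a pair $\{j,\ell\}$ with $i<j<k<\ell$ (this kills forbidden configurations~1 and the first case of~2). An $R$-singleton of $\Phi(w)$ occurs only when the stack is empty, which forbids both an enclosing pair (the second case of forbidden configuration~2) and a preceding $L$-singleton (forbidden configuration~3). The identity $w(\Phi(w))=w$ is then immediate from the definitions: a position that gets pushed and later popped is the smaller element of its pair, a position that does the popping is the larger element, and singletons keep their decoration.

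The main point, and the only place the non-crossing hypothesis is really used, is $\Phi(w(P))=P$ for every non-crossing partial matching $P$. Writing $w=w(P)$, I would argue by induction on the scanned position, keeping track of the invariant that just before position $i$ is read the stack of $\Phi(w)$ consists, in increasing order from bottom to top, exactly of those $j<i$ that in $P$ are $L$-singletons or smaller elements of pairs not yet completed by position $i$. The inductive step rests on the ``no straddling'' consequences of the forbidden configurations: if $\{j,k\}$ is a pair of $P$ then no pair or singleton of $P$ straddles $(j,k)$, so $w$ restricted to the positions strictly between $j$ and $k$ is a balanced word and the stack returns to its state with $j$ on top exactly when $k$ is read; and if $\{i\}$ is an $R$-singleton of $P$ then no pair straddles $i$ and no $L$-singleton precedes $i$, which forces the stack to be empty when $i$ is read. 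Granting the invariant, $\Phi$ assigns to each position of $A$ precisely its fate in $P$, whence $\Phi(w(P))=P$; together with the previous paragraph this gives the bijection. I expect establishing this stack invariant to be the one genuinely delicate step; the rest is bookkeeping.

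Compatibility with restriction is the identity $w(P/B)=w(P)|_B$ for a subinterval $B$, which I would check position by position: if $i\in B$ is a singleton of $P$, or lies in a pair of $P$ entirely contained in $B$, its decoration (resp.\ its being the smaller or larger element) is unchanged in $P/B$; and if $i\in B$ is matched in $P$ to some $i'\notin B$, then by definition of $P/B$ the position $i$ becomes a singleton decorated $L$ iff $i'>i$ iff $i$ is the smaller element of its pair iff $w(P)(i)=L$. In particular $P/B$ is again non-crossing, being the image under $\Phi$ of a word. Finally, for a finite interval $A$, using $P=\Phi(w(P))$: the matching $P$ has no $L$-singleton iff the stack is empty at the end of the scan, i.e.\ iff $w$ has as many $L$'s as $R$'s, and $P$ has no $R$-singleton iff the stack never becomes empty at a closing position, i.e.\ iff every prefix of $w$ has at least as many $L$'s as $R$'s; combining the two, $P$ is complete iff $w$ is well-parenthesized, which is the last assertion.
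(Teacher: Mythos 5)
The paper gives no proof of this lemma at all --- it is simply declared elementary --- so there is nothing to compare your argument against; your greedy parenthesis-matching construction is the standard way to make the statement precise, and it is correct in substance. Three points deserve tightening. First, the lemma is stated for arbitrary integer intervals $A$, and the paper applies it with $A=\Z$ (the infinite noodle) and with half-infinite windows, where \enquote{scanning the elements of $A$ in increasing order} has no starting point; your parenthetical claim that the recipe still assigns a well-defined fate is true but is exactly the point that needs an argument. The clean fix is to define $\Phi$ position-locally --- declare $i<j$ matched whenever $w(i)=L$, $w(j)=R$ and $w$ restricted to the open interval $(i,j)$ is well-parenthesized, and make the unmatched positions singletons with the obvious decorations --- since these are finite-window conditions; all of your stack arguments then become statements about finite restrictions of $w$ and go through verbatim. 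Second, the intermediate claim \enquote{the stack is empty at the end of the scan iff $w$ has as many $L$'s as $R$'s} is false as stated: for $w=RL$ the letter counts agree, yet position $1$ survives on the stack as an $L$-singleton, because each $R$-singleton also absorbs one unit of the count. The correct statement is that \emph{in the absence of $R$-singletons} the final stack height equals $\#L-\#R$; your combined biconditional (complete iff well-parenthesized) is still valid once the two conditions are applied in that order, but the sentence should be rephrased. Third, concluding that $P/B$ is non-crossing \enquote{being the image under $\Phi$ of a word} is circular, since the identity $\Phi(w(Q))=Q$ was only established for non-crossing $Q$; non-crossingness of $P/B$ should instead be checked directly against the three forbidden configurations (a short case analysis using that $B$ is a subinterval, so a cut pair always exits $B$ on one side). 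None of these affects the overall strategy, which is sound.
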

Using this correspondence, partial meandric systems are mapped to pairs $(w^a,w^b)$ of words in $\{L,R\}^A$. 
A pair $(w,r)$, where $w$ is a word in $\{L,R\}^A$, and $r$ a point in $A$, will be called a marked word.
Marked words are considered equivalent
if they differ only by a simultaneous shift of the domain set $A$
and of the marked point $r$.
We can define a natural distance between marked words $(w_1,r_1)$ and $(w_2,r_2)$ as follows:
\[D\big( (w_1,r_1), (w_2,r_2) \big) = \frac{1}{1+\max\{k: w_1/[r_1-k;r_1+k] \equiv w_2/[r_2-k;r_2+k] \} },\]
again with the convention that $D\big( (w_1,r_1), (w_2,r_2) \big)=0$ if they are equivalent and $1$ if $w_1(r_1) \neq w_2(r_2)$.
Again, this defines a compact metric space, which we denote $\mathcal W_*$.

We recall that the infinite noodle $\bm M_\infty$ is the meandric system on $\mathbb Z$ corresponding to random words $\bm w^a_\infty$, $\bm w^b_\infty$, where all images $\bm w^a_\infty(i)$ ($i \in \mathbb Z$) and $\bm w^b_\infty(j)$ ($j \in \mathbb Z$)
are uniform in $\{L,R\}$ and independent of each other.
On the other hand, a uniform meandric system $\bm M_n$ of size $n$ corresponds
to independent uniform well-parenthesized words $\bm w^a_n$, $\bm w^b_n$.
Since the bijection between partial matchings/meandric systems and words commutes with restriction,
\cref{prop:CvToInfiniteNoodle} is equivalent to the following proposition:
\begin{proposition}
\label{prop:LocalCvDyckpath}
A uniform random well-parenthesized word $\bm w_n$ on $\{0,\dots,n-1\}$
converges in the quenched Benjamini--Schramm sense
to the law of a random word $\bm w_\infty$, whose images $(\bm w_\infty(i))_{i \in \Z}$
are uniform in $\{L,R\}$ and independent of each other.
In other words, for any bounded continuous function $F: \mathcal W_* \rightarrow \R_+$, we have: 
\begin{equation}
\label{eq:ToProve}
\E \left[ F (\bm w_n,\bm i_n)  | \bm w_n \right] \overset{\P}{\underset{n \rightarrow \infty}{\rightarrow}} \E [ F(\bm w_\infty, 0) ],
\end{equation}
where $\bm i_n$ denotes a uniform element of $\{0, \ldots, n-1\}$.
\end{proposition}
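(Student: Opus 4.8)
The plan is to prove \cref{prop:LocalCvDyckpath} by the standard second-moment method for quenched local convergence: it suffices to show that for any bounded continuous $F : \mathcal W_* \to \R_+$, the conditional expectation $X_n := \E[F(\bm w_n, \bm i_n)\mid \bm w_n]$ converges in $L^2$ to the constant $c := \E[F(\bm w_\infty,0)]$. Since $L^2$ convergence to a constant is equivalent to convergence of the first moment to $c$ and of the second moment to $c^2$, I would establish the two statements
\begin{equation}
\label{eq:first-moment}
\E[X_n] = \E[F(\bm w_n, \bm i_n)] \xrightarrow[n\to\infty]{} c,
\end{equation}
\begin{equation}
\label{eq:second-moment}
\E[X_n^2] = \E\big[F(\bm w_n, \bm i_n)\,F(\bm w_n, \bm i_n')\big] \xrightarrow[n\to\infty]{} c^2,
\end{equation}
where in \eqref{eq:second-moment} $\bm i_n, \bm i_n'$ are two \emph{independent} uniform elements of $\{0,\dots,n-1\}$, sampled independently of $\bm w_n$. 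Convergence in probability then follows from Chebyshev, and as noted in the excerpt it is equivalent to convergence in distribution since the limit is deterministic.

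For \eqref{eq:first-moment}, which is exactly the annealed local convergence statement, the key input is the known local-limit behaviour of conditioned random walks / uniform Dyck paths around a uniform position. Writing the increments of $\bm w_n$ as $\pm 1$ steps of a simple random walk bridge of length $n$ conditioned to stay nonnegative (a uniform excursion), the result of Janson \cite{janson2016fringe} together with \cite{Bor21} gives that, viewed from a uniform position $\bm i_n$ far from the endpoints, the environment converges to a two-sided i.i.d.\ sequence of uniform $\{L,R\}$ letters; more precisely, for any fixed $k$ the restriction $\bm w_n / [\bm i_n - k, \bm i_n + k]$ converges in distribution to $\bm w_\infty / [-k,k]$, which is uniform on $\{L,R\}^{2k+1}$. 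Because $F$ is bounded and continuous for the metric $D$ — hence, up to arbitrarily small error, depends only on such a finite window — \eqref{eq:first-moment} follows by approximating $F$ by window-functionals and applying this convergence, the boundary effect (the event that $\bm i_n$ is within distance $k$ of $0$ or $n-1$) having probability $O(k/n) \to 0$.

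For \eqref{eq:second-moment}, the point is \emph{decorrelation of two independent uniform marks}. Fix $k$ and reduce $F$ to a function of a $k$-window. Conditionally on $\bm w_n$, the pair $(\bm i_n, \bm i_n')$ is uniform on $\{0,\dots,n-1\}^2$, so with probability $1 - O(1/n)$ we have $|\bm i_n - \bm i_n'| > 2k$ and both marks are at distance $> k$ from the endpoints; on this event the two windows $\bm w_n/[\bm i_n-k,\bm i_n+k]$ and $\bm w_n/[\bm i_n'-k,\bm i_n'+k]$ are disjoint subwords of the uniform Dyck word. The remaining task is to show these two disjoint windows, at a random macroscopic distance, are asymptotically independent and each asymptotically uniform — equivalently, that the joint law of the environment seen from two independent uniform positions converges to the product of two copies of the i.i.d.\ limit. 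This again follows from the structure of the uniform excursion: conditioning on the heights at the two window locations and the excursion decomposition into independent pieces, or directly from a two-point version of the local limit theorem for conditioned walks, yields that the pair of windows converges to a pair of independent uniform words. Thus $\E[X_n^2] \to c^2$.

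The main obstacle is the two-point estimate underlying \eqref{eq:second-moment}: one must control the \emph{joint} distribution of a uniform Dyck path around two independent uniform points, not just the one-point marginal supplied off the shelf by \cite{janson2016fringe, Bor21}. I would handle this either by (a) invoking, if available, a quenched version of the local convergence in those references — quenched Benjamini--Schramm convergence is literally designed so that the one-point annealed statement plus a quenched upgrade gives two-point decorrelation for free — or, failing a clean citation, by (b) a direct argument: decompose the uniform excursion of length $n$ at the two marked times using the cycle-lemma / last-passage decomposition, observe that the three pieces are, conditionally on the two intermediate heights and lengths, independent uniform excursion-like paths, and apply the one-point result to each piece; the heights are $O(\sqrt n)$ and the lengths macroscopic with high probability, so each local window is unaffected by the conditioning in the limit. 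Routine but slightly tedious bookkeeping is needed to make the ``disjoint windows'' event and the endpoint-avoidance event explicit and to pass from window-functionals back to general bounded continuous $F$ via a uniform approximation argument; I would relegate those estimates to the body of the proof.
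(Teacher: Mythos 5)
Your strategy is sound and would yield the proposition, but it is a genuinely different route from the one the paper takes. The paper observes that, for a local function $F(w,r)=g(w(r-k),\dots,w(r+k))$, the conditional expectation $\E[F(\bm w_n,\bm i_n)\mid \bm w_n]$ \emph{is} (up to an $O(1/n)$ boundary term) the empirical average $\frac1n\sum_r g(\bm w_n(r-k),\dots,\bm w_n(r+k))$, i.e.\ a consecutive-pattern count in a conditioned simple random walk; it then cites \cite[Proposition 2.2]{Bor21} (built on \cite[Lemma 6.1]{janson2016fringe}), which already gives asymptotic normality, hence convergence in probability, of such counts to the i.i.d.\ value $\E[g(\zeta_1,\dots,\zeta_{2k+1})]$. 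The quenched statement is thus obtained in one step, with no moment computation, and the extension to general $F$ is the same uniform-approximation argument you describe. Your second-moment method proves the same thing from scratch, and its cost is exactly the piece you flag as the main obstacle: the two-point decorrelation of windows around two independent uniform marks in a uniform Dyck word. That estimate is true and provable --- either by ratios of ballot-type counts via a local CLT, or, more elegantly, by the total-variation ``swapping'' argument that the authors themselves introduce in the erratum (\cref{lem:additional}) to handle the genuinely harder two-word statement \cref{prop:CvToInfiniteNoodle} --- but as written your option (b) is only a sketch (the excursion decomposition at the two marked times needs care, since the $k$-windows straddle the cut points and the three pieces are only conditionally independent given heights and lengths), and your option (a) is somewhat circular unless you notice, as the paper does, that the cited references supply convergence in probability of the empirical pattern count directly, which already \emph{is} the quenched one-word statement and makes the second moment unnecessary. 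In short: the paper buys the result by recognizing the conditional expectation as a pattern count covered by existing LLN/CLT results; your approach is more self-contained and generalizes more readily (it is essentially the scheme needed for the two-word version), but to be complete it must include a proof of the two-point estimate rather than deferring it.
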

\begin{proof}
As in the case of Proposition \ref{prop:CvToInfiniteNoodle}, it is enough to prove this convergence in distribution.
%This proposition is essentially a corollary of some result of Janson \cite{janson2016fringe}, but since the latter are formulated in terms of tree functionals, let us give some details.
We first consider the case where $F$ depends only 
on the restriction of $w$ to a fixed size neighbourhood of $r$, say $F(w,r)=g(w(r-k),\dots,w(r+k))$ (we call such a function a local function).
Then 
\[ \E \left[ F (\bm w_n,\bm i_n)  | \bm w_n \right] =
\frac{1}{n} \sum_{r=k}^{n-k-1} g(\bm w_n(r-k),\dots, \bm w_n(r+k)) +O(1/n), \]
where the error term comes from boundary effect.
Taking $\bm w_n$ uniformly at random among all well-parenthesized word on $\{0, \ldots,n-1\}$
amounts to consider a simple random walk conditioned to come back at $0$ at time $n-1$ and be nonnegative between $0$ and $n-1$ (simply replace $L$ by $+1$ and $R$ by $-1$).
In this context it is known that 
\[\frac{1}{n} \sum_{r=k}^{n-k-1} g(\bm w_n(r-k),\dots, \bm w_n(r+k))\]
converges in probability to 
\[\mathbb E[g(\zeta_1,\dots, \zeta_{2k+1})],\]
where $(\zeta_1,\dots, \zeta_{2k+1})$ are i.i.d.random variables uniform in $\{-1;1\}$
(or in $\{L,R\}$): see \cite[Proposition $2.2$]{Bor21} for an asymptotic normality result implying this convergence in probability 
(Borga's result is itself based on \cite[Lemma 6.1]{janson2016fringe}).
Since 
\[\E [ F(\bm w_\infty, 0) ]= \mathbb E[g(\zeta_1,\dots, \zeta_{2k+1})],\]
this proves \eqref{eq:ToProve} for any {\em local} function $F$.

The general case follows by standard arguments, approximating $F$
by local functions. Here are the details.
We consider a general continuous function $F: \mathcal W_* \rightarrow \R_+$.
Since $\mathcal W_*$ is compact, $F$ is uniformly continuous.
For $k \ge 1$, we set $F_k(w,r)=F(w/[r-k;r+k],r)$.
We note that 
\[D((w,r),(w/[r-k;r+k],r)) \le 1/(k+1).\]
This bound is uniform on $(w,r)$ and implies, using the uniform continuity of $F$,
that $F_k$ tends uniformly to $F$.
We then write: a.s.,~it holds that
\begin{multline}
\label{eq:Triangular}
 \Big| \E \left[ F (\bm w_n,\bm i_n)  | \bm w_n \right] - \E [ F(\bm w_\infty, 0) ] \Big|
  \le \Big| \E \left[ F (\bm w_n,\bm i_n)  | \bm w_n \right] -
 \E \left[ F_k (\bm w_n,\bm i_n)  | \bm w_n \right] \Big|
 \\ + \Big|\E \left[ F_k (\bm w_n,\bm i_n)  | \bm w_n \right] - \E [ F_k(\bm w_\infty, 0) ] \Big| 
 +  \Big| \E [ F_k(\bm w_\infty, 0) ]- \E [ F(\bm w_\infty, 0) ] \Big|. 
 \end{multline}
 Fix $\eps>0$. Using the uniform convergence of $F_k$ to $F$,
 we can find $k$ such that the first and third terms are at most $\eps$ (a.s.~for the first term,
 which is a random variable).
 But, for this value of $k$, the second term converges to $0$ in probability,
 since we know that \eqref{eq:ToProve} holds for the local function $F_k$.
 We conclude that the left-hand side of \eqref{eq:Triangular} tends to $0$ in probability, as wanted.
\end{proof}

\section{Proof of \cref{thm:cv_proba_ccMn}}
\label{sec:mainproof}

We prove here our main result, Theorem \ref{thm:cv_proba_ccMn}, using Proposition \ref{prop:CvToInfiniteNoodle}. Indeed, it turns out that the number of connected components $cc(M)$ of a meander $M$ can be expressed as the expectation of
a functional of $(M,i)$, where $i$ is a random element of $M$.
To this end, if $M$ is a meandric system of size $n$ and $i$ an integer in $\{0,\dots,2n-1\}$,
we denote by $|C_i(M)|$ the size of the connected component $C_i(M)$ of $M$ containing $i$.

\begin{lemma}\label{lem:ccM_As_Expectation}
Let $M$ be a meandric system of size $n$ and $\bm i_n$ a uniform random integer in $\{0,\dots,2n-1\}$.
Then
$\tfrac{\cc(M)}{n} = \esper\big[ \tfrac{2}{|C_{\bm i_n}(M)|} \big]$.
\end{lemma}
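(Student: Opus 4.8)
The plan is to unfold the expectation over the uniform marked point $\bm i_n$ and then regroup the resulting sum according to the connected components of $M$. Since $M$ is a meandric system of size $n$, its vertex set is $\{0,\dots,2n-1\}$, so that
\[
\esper\Big[ \tfrac{2}{|C_{\bm i_n}(M)|} \Big] = \frac{1}{2n} \sum_{i=0}^{2n-1} \frac{2}{|C_i(M)|} = \frac{1}{n} \sum_{i=0}^{2n-1} \frac{1}{|C_i(M)|}.
\]
First I would note that the map $i \mapsto C_i(M)$ sends $\{0,\dots,2n-1\}$ onto the set of connected components of $M$, and that each component $C$ has exactly $|C|$ preimages (namely its own elements). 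Hence I can rewrite the last sum by summing first over components $C$ and then over $i \in C$:
\[
\sum_{i=0}^{2n-1} \frac{1}{|C_i(M)|} = \sum_{C} \sum_{i \in C} \frac{1}{|C|} = \sum_{C} |C| \cdot \frac{1}{|C|} = \sum_{C} 1 = \cc(M),
\]
where the sums over $C$ range over the connected components of $M$. Combining the two displays gives $\esper\big[ \tfrac{2}{|C_{\bm i_n}(M)|} \big] = \tfrac{\cc(M)}{n}$, as claimed.

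There is no real obstacle here: the statement is a pure counting identity, and the only point deserving a word of care is bookkeeping of the factor $2$, which simply reflects that a meandric system of size $n$ has $2n$ vertices, so averaging $2/|C_{\bm i_n}(M)|$ over the $2n$ points produces $\tfrac{1}{n}\sum_C 1$. I would also remark that $|C_i(M)| \ge 1$ for every $i$, so all terms are well-defined and finite, which makes the interchange of summation above entirely elementary.
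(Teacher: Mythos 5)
Your proof is correct and follows essentially the same argument as the paper: expand the expectation as a sum over the $2n$ points and observe that each connected component contributes exactly $1$ to $\sum_i 1/|C_i(M)|$. Nothing further is needed.
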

\begin{proof}
Since $\bm i_n$ is uniformly distributed, we have
\[ \esper\big[ \tfrac{2}{|C_{\bm i_n}(M)|} \big] = \frac{2}{2n} \sum_{i=0}^{2n-1} \frac{1}{|C_i(M)|}.\]
Let $C$ be a connected component of $M$. Each $i$ in $C$ contributes $1/|C|$ to the above sum, so that all of them together contribute $1$ to the sum.
Since this holds for all connected components of $M$, the value of the sum
is simply the number of connected components of $M$, proving the lemma.
\end{proof}

%In this section, we use the fact that $(M,\bm i)$ converges in the local sense to the infinite noodle $(\bm M_\infty,0)$, marked in $0$. This suggests to define $\kappa$ as \[\kappa:= \esper\big[ \tfrac{2}{s(\bm M_\infty,0)} \big].\]

%\subsection{Continuity of the inverse component size and end of the proof of the theorem}
We consider the following function on the set $\bM_{part}$ of marked partial meandric systems:
$\IS(M,r):=  \tfrac{1}{|C_r(M)|}$ ($\IS$ stands for \enquote{inverse size}).
Though not continuous on the whole space $\bM_{part}$,
this function has a nice continuity set.

\begin{lemma}
\label{lem:continuity_IS}
The functional $\IS$ is continuous at any {\em complete} meandric system. 
\end{lemma}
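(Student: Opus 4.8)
The plan is to prove sequential continuity at a marked \emph{complete} meandric system $(M,r)$: given any sequence $(M_j,r_j)$ in $\bM_{part}$ with $d\big((M_j,r_j),(M,r)\big)\to 0$, I will show $\IS(M_j,r_j)\to\IS(M,r)$. Write $C:=C_r(M)$ and $m:=|C|\in\{1,2,\dots\}\cup\{\infty\}$, so that $\IS(M,r)=1/m$ (with the convention $1/\infty=0$). The only feature of the distance I will use is that $d\big((M_j,r_j),(M,r)\big)\to 0$ means: for every $k\ge 0$ there is $j_0(k)$ such that, for $j\ge j_0(k)$, the marked restrictions $M_j/[r_j-k,r_j+k]$ and $M/[r-k,r+k]$ agree up to a shift sending $r_j$ to $r$.

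The heart of the argument is a structural observation that I would isolate as a preliminary claim: \emph{if $\widetilde M$ is a partial meandric system, $v$ a point of its domain, $I$ a subinterval of that domain containing $v$, and the connected component $D$ of $v$ in the restriction $\widetilde M/I$ contains no singleton, then $D$ is in fact a full connected component of $\widetilde M$, namely $C_v(\widetilde M)$.} The proof is short: the absence of a singleton at any vertex $w\in D$ means that both arcs of $\widetilde M$ incident to $w$ (the one above and the one below the axis) are preserved by the restriction to $I$, so both $\widetilde M$-neighbours of $w$ lie in $D$; hence $D$ is closed under $\widetilde M$-adjacency, and being connected it must be a single component, the one containing $v$.

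Granting this, I would split into two cases. If $m<\infty$, choose $k$ large enough that $C\subseteq[r-k,r+k]$; since $C$ is a component of $M$, no arc of $M$ incident to $C$ leaves $C$, hence none leaves the window, so $C$ survives untouched and is exactly the component of $r$ in $M/[r-k,r+k]$ --- singleton-free and of size $m$. For $j\ge j_0(k)$ the component of $r_j$ in $M_j/[r_j-k,r_j+k]$ is a shift of it, hence singleton-free of size $m$, so by the claim it equals $C_{r_j}(M_j)$; thus $|C_{r_j}(M_j)|=m$ and $\IS(M_j,r_j)=1/m=\IS(M,r)$ for all large $j$ (in fact $\IS$ is locally constant near such $(M,r)$). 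If instead $m=\infty$, so $\IS(M,r)=0$, it suffices to prove $|C_{r_j}(M_j)|\to\infty$: fixing a positive integer $\ell$, I pick a finite connected subgraph $T\subseteq C$ with $r\in T$ and $|T|\ge\ell$ (a union of finitely many paths in $C$ emanating from $r$) and $k$ with $T\subseteq[r-k,r+k]$; all arcs used by $T$ survive in $M/[r-k,r+k]$, so the component of $r$ there has at least $\ell$ vertices, hence for $j\ge j_0(k)$ so does the component of $r_j$ in $M_j/[r_j-k,r_j+k]$, and a fortiori $|C_{r_j}(M_j)|\ge\ell$. Letting $\ell\to\infty$ gives $|C_{r_j}(M_j)|\to\infty$.

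The main obstacle --- and essentially the only nontrivial point --- is precisely the structural observation of the second paragraph. Its role is to rule out the scenario in which a nearby \emph{partial} meandric system $M_j$ displays the same bounded-window picture around $r_j$ as $M$ does around $r$, yet has the component of $r_j$ continue (possibly forever) outside the window. Completeness of the limit $M$ forces the window-component to be singleton-free, which seals it off and fixes its true size. Everything else is routine bookkeeping with the restriction operation; one just has to fix the convention that $[r_j-k,r_j+k]$ is implicitly intersected with the (possibly finite) underlying interval, which does not affect anything above. Finally, this lemma is exactly what is needed later, since the infinite noodle is complete almost surely.
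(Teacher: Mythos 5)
Your proof is correct and follows essentially the same two-case argument as the paper (finite component: the whole loop fits in a window and is sealed off; infinite component: force at least $m$ points into the window). The only difference is presentational: you isolate as an explicit claim the fact that a singleton-free window-component is a full component of the ambient system, which the paper handles more tersely by observing that the finite component of a complete system is a loop.
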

\begin{proof}
  Consider a sequence $(M_j,r_j)_{j \ge 1}$ converging to a {\em complete} meandric system $(M,r)$.
We want to prove that 
\begin{equation}
\label{eq:Continuity_IS}
 \lim_{j \to +\infty} \IS(M_j,r_j) = \IS(M,r).
\end{equation}
Assume first that the component $C_r(M)$ is finite. Then, since $M$ is complete, $C_r(M)$ is a loop. Let us denote by $k$ the longest distance between a point of $C_r(M)$ and $r$.
For $j$ large enough, we have $M_j / [r_j-k;r_j+k] =M / [r-k;r+k]$.
The latter contains entirely $C_r(M)$. We conclude that for $j$ large enough, $C_{r_j}(M_j)$ is also a loop, and has the same cardinality as $C_r(M)$. Hence, $\IS(M_j,r_j) = \IS(M,r)$, proving \eqref{eq:Continuity_IS} 
in the finite component case.

Assume now that $C_r(M)$ is infinite, so that $ \IS(M,r)=0$.
We fix an integer $m>0$. We can choose $k$ such that the component of $r$
in $M \cap [r-k;r+k]$ has at least $m$ points.
For $j$ large enough $M_j / [r_j-k;r_j+k] =M / [r-k;r+k]$,
so that the connected component of $r_j$ in $M_j$ contains at least $m$ points, i.e.~$\IS(M_j,r_j) \le 1/m$.
Since $m$ was arbitrary, we have
$ \lim_{j \to +\infty} \IS(M_j,r_j)=0$, proving \eqref{eq:Continuity_IS}
in the infinite component case.
\end{proof}

We can now prove our main theorem. 

\begin{proof}[Proof of \cref{thm:cv_proba_ccMn}]
Using Proposition \ref{prop:CvToInfiniteNoodle} along with \cite[Theorem $4.11$]{Kal17}, we know that
the sequence of random measures $\cL\big((\bm M_n, \bm i_n) | \bm M_n\big)$ converges in distribution for the vague topology towards the law of the infinite noodle $\cL\big(\bm M_\infty, 0\big)$. Thus, by \cite[Lemma $4.12$]{Kal17}, we have
\begin{equation}
\label{eq:cvofexpectation}
\E\big[ \IS(\bm M_n, \bm i_n) | \bm M_n \big] \overset{\P}{\underset{n \rightarrow \infty}{\rightarrow}} \E [\IS(\bm M_\infty, 0)],
\end{equation}
provided that the set of discontinuity of $\IS$ is negligible under the limiting measure $\cL\big(\bm M_\infty, 0\big)$. By Lemma \ref{lem:continuity_IS}, it is enough to prove that $\bm M_\infty$ is a.s.~a complete meandric system.
Assume without loss of generality that $w^a_\infty(0)=L$. Then, again with the convention that $L=1$ and $R=-1$, the random walk $(w^a_\infty(i))_{i \geq 1}$ is a.s.~recurrent. Let $j := \inf \{ i \geq 1, \sum_{k = 1}^i w^a_k=-1 \}$. Then $j<\infty$ a.s.,~and $0$ and $j$ are connected by an arc in the upper half-plane. The same works in the lower half-plane and, therefore, there is a.s.~no singleton in $\bm M_\infty$ and $\bm M_\infty$ is a.s.~complete. This proves Eq. \eqref{eq:cvofexpectation}.
%Applying \cref{prop:CvToInfiniteNoodle} to the bounded continuous function $\IS$ 
%(see \cref{lem:continuity_IS}), we have :
%\[ \E\big[ IS(\bm M_n, \bm i_n) | \bm M_n \big] \overset{\P}{\underset{n \rightarrow \infty}{\rightarrow}} \E [\IS(\bm M_\infty, 0)]. \]
By \cref{lem:ccM_As_Expectation}, the left-hand side of Eq. \eqref{eq:cvofexpectation} is $\tfrac1{2n} \cc(\bm M_n)$,
while $\kappa$ is twice the right-hand side (see \eqref{eq:kappa}).
Therefore, we have proved that $\tfrac1{n} \cc(\bm M_n)$ converges in probability to $\kappa$.
\end{proof}

\section{Two formulas for $\kappa$}
\label{sec:formulas}

It is possible to write the limiting constant $\kappa$ of Theorem \ref{thm:cv_proba_ccMn} in different ways. This allows in particular to bound from above and from below the value of this constant.

\subsection{Lower bounds}
Given a realization of the infinite noodle $\bm M_\infty$,
we will define the \textit{shape} $S_0$ of the component $C_0(\bm M_\infty)$ containing $0$. To obtain this shape, let $E \subset \Z$ be the set of points of $C_0(\bm M_\infty)$, and let $(\bm w^a_\infty, \bm w^b_\infty)$ be the words in $\{ L,R \}^{\Z}$ describing $\bm M_\infty$. 
Assuming $E$ is finite, there exists a unique increasing bijection $g: E \rightarrow \llbracket 0, |E|-1 \rrbracket$. The shape is the meandric system of size $|E|/2$ obtained from the pair $(\bm w^a, \bm w^b)$ on $\{ L,R \}^{|E|}$ such that, for all $0 \leq i \leq |E|-1$, $\bm w^a(i)= \bm w^a_\infty(g^{-1}(i))$ and $\bm w^b(i)=\bm w^b_\infty(g^{-1}(i))$. In particular, $S_0$ is a meandric system with only one connected component,
i.e.~a meander. It is therefore possible to write
\begin{align}\label{eq:decompo_kappa}
\kappa = \sum_{k=1}^{\infty} \frac{1}{k} \sum_{C \in M^{(1),k}} \P\left( S_0=C \right),
\end{align}
where $M^{(1),k}$ is the set of meanders of size $k$ (recall that they are defined on $\{ 0, \ldots, 2k-1 \}$).
 The interest of this formula is that, the meander $C$ being fixed, it is possible to compute $\P (S_0=C)$.
 To this end, we introduce some terminology.
Let $C$ be a meander of size $k$. We consider the union of $C$ and the real axis;
it divides the plane in some regions, two unbounded ones and several bounded ones.
We call such bounded regions {\em faces} of the meander, 
and denote their set by $\mathcal F(C)$. 
A face $F$ in $\mathcal F(C)$ is incident to a number of segments $[i;i+1]$;
we let $I(F)$ be the set of indices $i$ such that $F$ is incident to $[i;i+1]$.
A meander with six faces and the corresponding index sets
are shown on \cref{fig:meander_faces}.
With this notation, we have the following.
\begin{figure}
  \[
\begin{tikzpicture}[scale=.7,font=\small]
\draw (0,0) -- (7,0);
\draw (1,.1) -- (1,-.1) (2,.1) -- (2,-.1) (3,.1) -- (3,-.1) (4,.1) -- (4,-.1) (5,.1) -- (5,-.1) (6,.1) -- (6,-.1);
\draw (.9,-.3) node[gray]{$0$}; 
\draw (2.1,-.3) node[gray]{$1$}; 
\draw (2.9,-.3) node[gray]{$2$}; 
\draw (3.9,-.3) node[gray]{$3$}; 
\draw (5.1,-.3) node[gray]{$4$}; 
\draw (6.1,-.3) node[gray]{$5$}; 
\draw (3,0) arc (0:180:.5);
\draw (4,0) arc (0:180:1.5);
\draw (6,0) arc (0:180:.5);
\draw (1,0) arc (180:360:.5);
\draw (3,0) arc (180:360:1.5);
\draw (4,0) arc (180:360:.5);
\draw (1.5,-.2) node{$F_1$}; 
\draw (2.5,.2) node{$F_2$}; 
\draw (2.5,1) node{$F_3$}; 
\draw (4.5,-.2) node{$F_4$}; 
\draw (4.5,-1) node{$F_5$}; 
\draw (5.5,.2) node{$F_6$}; 
\end{tikzpicture} 
  \]
  \caption{A meander with six faces.
In this example, we have
$I(F_1)=\{0\}$, $I(F_2)=\{1\}$, $I(F_3)=\{0,2\}$, $I(F_4)=\{3\}$, $I(F_5)=\{2,4\}$ and $I(F_6)=\{4\}$.}
  \label{fig:meander_faces}
\end{figure}
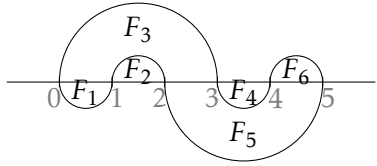
\begin{proposition}
\label{lem:lowercomputation}
For any meander $C$ of size $k$, we have
\begin{equation}\label{eq:proba_meandre}
  \P (S_0=C) = 2^{-4k+1} k\, \sum_{\ell_1,\dots,\ell_{2k-1} \ge 0}  \left( \prod_{F \!\in\ \mathcal F(C)}
 \Cat_{\ell_{I(F)}} 2^{-2\ell_{I(F)}} \right),
 \end{equation}
 where, for a set $I$ of indices, we use the notation $\ell_I=\sum_{i\in I} \ell_i$.
\end{proposition}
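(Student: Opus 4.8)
The plan is to compute $\P(S_0 = C)$ by conditioning on the event that the component of $0$ in $\bm M_\infty$ has shape $C$, and summing over the possible "inflations" of $C$ inside $\Z$. Intuitively, the shape $C$ of size $k$ is obtained from $C_0(\bm M_\infty)$ by deleting all the points of $\Z$ not in $C_0(\bm M_\infty)$ and relabelling. Thus, a realization with $S_0 = C$ is obtained by choosing, for each face $F$ of $C$, how many extra points sit "inside" that face (these extra points must form a meandric system that closes up inside $F$, contributing nothing to $C_0$), together with the constraint that no extra points escape to infinity on either side of the real axis around $C$. Concretely, if $\ell_i$ is the number of extra points inserted between consecutive points $i$ and $i+1$ of $C$ (for $i = 0, \dots, 2k-2$; by non-crossing-ness insertions only happen "between" the points of $C$, not outside, and the relevant segments $[i;i+1]$ are exactly those incident to a bounded face), then the upper matching restricted to the block of $\ell_i$ points together with the arcs of $C$ passing over the corresponding face $F$ must form a valid non-crossing configuration, and likewise below.

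First I would set up the bijective/enumerative correspondence precisely: a realization of $(\bm w^a_\infty, \bm w^b_\infty)$ on a finite window with $S_0 = C$ corresponds to a choice of nonnegative integers $(\ell_i)_{0 \le i \le 2k-2}$ supported on segments incident to faces, plus, for each face $F$, a well-parenthesized word of length $\ell_{I(F)}$ on each side (upper and lower) describing how the inserted points match among themselves. The number of such well-parenthesized words of length $2m$ is $\Cat_m$ (so $\Cat_{\ell_{I(F)}/2}$; I would need to track whether the indexing uses half-sizes — matching the statement, $\Cat_{\ell_{I(F)}}$ presumably already absorbs this convention, or $\ell_i$ counts pairs). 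Then the probability is the product over all the i.i.d.\ uniform letters involved: there are $2k$ points in $C$ itself contributing a factor $2^{-2 \cdot 2k} = 2^{-4k}$ (upper and lower words, $2k$ letters each), and for each face $F$ the inserted points contribute $2^{-2 \cdot 2\ell_{I(F)}}$... here one must be careful that each inserted point lies in exactly one face on the upper side and one face on the lower side, but since $I(F)$ for the upper faces and $I(F)$ for the lower faces partition $\{0,\dots,2k-2\}$ differently, the bookkeeping $\sum_{F} \ell_{I(F)}$ over upper faces equals $\sum_i \ell_i$, same for lower. So the total number of inserted letters is $2 \sum_i \ell_i$ on each side, giving $2^{-2\sum_i \ell_i}$ per side, i.e. $2^{-4 \sum_i \ell_i}$ — which should recombine into $\prod_F 2^{-2\ell_{I(F)}}$ once one splits the upper and lower contributions. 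The factor $k$ (or $2^{-4k+1}k$ overall) comes from the normalization $\P(0 \in C_0) $ being trivially $1$ but rather from the fact that $S_0 = C$ can be "rooted" at $0$ in $2k$ ways along the loop, divided by $2$ for the up–down symmetry, or from a cyclic-lemma / re-rooting argument — pinning down exactly where the $k$ and the $2^{+1}$ come from is the delicate normalization point.

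The key steps in order: (1) formalize the shape operation and show that $\{S_0 = C\}$ decomposes, up to the re-rooting factor, as a disjoint union over $(\ell_i)$ and over the per-face well-parenthesized words, with the non-crossing and "no escape to infinity" constraints automatically satisfied; (2) verify that the constraint forcing $C_0$ to be exactly $C$ (and not larger) is precisely that every inserted point is matched within its face on both sides — this uses that $C$ is a single loop and the non-crossing structure, and is where one invokes that a simple random walk started at a fixed height returns, so nothing escapes; (3) count: $2^{-4k}$ for the letters of $C$, $\prod_F \Cat_{\ell_{I(F)}} 2^{-2\ell_{I(F)}}$ (suitably interpreted) for the insertions, and a combinatorial factor $2k/2 = k$ for re-rooting modulo the vertical flip; (4) assemble into \eqref{eq:proba_meandre} and check consistency (e.g.\ summing $\P(S_0 = C)$ over all meanders $C$ should give $\P(|C_0| < \infty) \le 1$, and plugging into \eqref{eq:decompo_kappa} should reproduce \eqref{eq:kappa}).

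The main obstacle I expect is step (2) combined with the precise normalization in step (3): carefully arguing that demanding $S_0 = C$ is equivalent to the local picture "$C$ with independent closed sub-meanders glued into each face, and nothing else attached", i.e.\ that there is no interaction between faces and no leakage of the component $0$ beyond $C$, and then correctly accounting for the re-rooting factor $k$ and the stray power of $2$. This requires a clean description of how inserting a point between $i$ and $i+1$ interacts with the arcs of $C$ spanning that gap — in particular that an inserted point can only be matched to another inserted point lying in the same face, never to a vertex of $C$ and never across a face boundary — which follows from non-crossing-ness but needs to be stated as a lemma. Once that structural lemma is in place, the probability computation is a routine product of independent uniform-bit probabilities and the Catalan count of non-crossing matchings, and \eqref{eq:proba_meandre} drops out.
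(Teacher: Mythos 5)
Your overall strategy is the same as the paper's: decompose the event $\{S_0=C\}$ according to the gap sizes $m_i=2\ell_i$ between consecutive points of the component, observe that for each face $F$ the gap letters lying in $F$ must form a well-parenthesized word (probability $\Cat_{\ell_{I(F)}}2^{-2\ell_{I(F)}}$, these events being independent across faces since each gap/side pair belongs to exactly one face), and multiply by $2^{-4k}$ for the $4k$ prescribed letters at the $2k$ points of the component. Your structural step (2) — that an inserted point contributes to $C_0$ unless it is matched within its face on both sides, and that conditions (a) prescribed letters at $E$ plus (b) well-parenthesized gap words per face exactly characterize the event — is the right lemma and is what the paper proves by peeling the arcs of $C$ from the inside out. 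Your worry about "escape to infinity" and the invocation of random-walk recurrence is unnecessary: once every point of $E$ is matched within $E$ on both sides, $E$ is closed under both matchings and is therefore exactly the component of $0$; nothing outside needs to be controlled.

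The genuine gap is the normalization, which you explicitly leave unresolved and for which your proposed resolution is wrong. There is no quotient by a vertical flip and no cyclic lemma: $C$ is a fixed labelled meander, and the factor is obtained by writing $\P(S_0=C)=\sum_{i=1}^{2k}\P\bigl((S_0=C)\wedge A_i\bigr)$ where $A_i$ is the event that $0$ is the $i$-th smallest point of $C_0(\bm M_\infty)$; by translation invariance of the infinite noodle all $2k$ summands are equal, so $\P(S_0=C)=2k\,\P\bigl((S_0=C)\wedge A_1\bigr)$, and the event with $A_1$ (i.e.\ $0$ is the leftmost point) is the one you then decompose over the $\ell_i$'s. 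This gives the prefactor $2k\cdot 2^{-4k}=2^{-4k+1}k$; your proposed $2k/2=k$ "modulo the vertical flip" would yield $k\cdot 2^{-4k}$, off by a factor of $2$ from \eqref{eq:proba_meandre}. Everything else in your computation (including the Catalan indexing: $\ell_i$ counts pairs, so $\Cat_{\ell_{I(F)}}$ with $2\ell_{I(F)}$ letters per face is the correct convention) matches the paper once this re-rooting step is stated correctly.
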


\begin{proof}
Fix a meander $C$ of size $k$. 
For $1 \le i \le 2k$, we call $A_i$ the event \enquote{$0$ is the $i$-th smallest point
in $C_0(\bm M_\infty)$}. We clearly have
\[ \P (S_0=C) = \sum_{i=1}^{2k} \P \big[ (S_0=C) \and A_i \big].\]
By translation invariance of the model, the summands in the right-hand-side are all equal and
\begin{equation}\label{eq:proba_0AGauche}
  \P (S_0=C) = 2k\, \P \big[ (S_0=C) \and A_1 \big]. 
  \end{equation}
We split the event $\{(S_0=C) \and A_1\}$, depending on the set of points $E$
of $C_0(\bm M_\infty)$. Under the event $\{(S_0=C) \and A_1\}$, we have $|E|=2k$ and $\min(E)=0$,
so that $$E=\{0,m_1+1,m_1+m_2+2,\dots, m_1+\dots+m_{2k-1}+2k-1\},$$
for some integers $m_1,m_2,\dots,m_{2k-1} \ge 0$;
in other words, $m_i$ is the number of empty places (that is, integers that do not belong to $S_0$) between the $i$-th and $i\!+\!1$-st elements of $E$.
Necessarily, each $m_i$ must be even:
indeed since $C_0$ is connected, any other component of the meandric system
must cross the segment between the $i$-th and the $i\!+\!1$-st point of $E$
an even number of times.
We therefore write $m_i=2\ell_i$ for integers $\ell_1,\ell_2,\dots,\ell_{2k-1} \ge 0$.

Fixing some integers $\ell_1,\ell_2,\dots,\ell_{2k-1} \ge 0$, we compute the probability of the event
\begin{equation}
\label{eq:Event}
(S_0=C) \and A_1 \and \big(E=\{0,m_1+1,m_1+m_2+2,\dots, m_1+\dots+m_{2k-1}+2k-1\}\big).
\end{equation}

We claim that the probability of this event is given by
\begin{equation}
  \label{eq:Proba}
  2^{-4k}  \prod_{F \!\in\ \mathcal C} \Cat_{\ell_{I(F)}} 2^{-2\ell_{I(F)}}\revision{.}
\end{equation}

To avoid complicated notation, we will prove this formula on an example.
We consider the meander drawn on \cref{fig:meander_faces}.
We recall that the infinite noodle is constructed from two independent
random words $\bm w^a_\infty$ and $\bm w^b_\infty$ in the alphabet $\{L,R\}$ (the upper and lower words below;
we shall also think of $L$ and $R$ as arrows pointing to the left or right, when convenient).
Then the event \eqref{eq:Event} holds if and only if the words $\bm w^a_\infty$ and $\bm w^b_\infty$ satisfy the following properties.
\begin{itemize}
  \item All upper and lower arrows at points $\{0,m_1+1,m_1+m_2+2,\dots, m_1+\dots+m_{2k-1}+2k-1\}$
    should be pointing in a specific direction so that the infinite noodle restricted to this set
    can have shape $C$. In our example, it means that at $0$, both arrows should point to the right,
    at $m_1+1$, the upper arrow points to the right, and the lower to the left and so on\dots
    This happens with probability $2^{-4k}$ (here $k=3$).
  \item To have an arc in the lower matching between $0$ and $m_1+1$,
    we need to have a well-parenthesized word in $L$ and $R$ at positions $\{1,\dots,m_1\}$
    in the lower word $\bm w^b_\infty$.
    This happens with probability $\Cat_{\ell_1} 2^{-2\ell_1}$,
    where $m_1=2\ell_1$.
    Similarly, to have $m_1+1$ and $m_1+m_2+2$ connected in the upper matching,
    we need to have a well-parenthesized word in $L$ and $R$ at positions $\{m_1+2,\dots,m_1+m_2+1\}$.
    This happens with probability $\Cat_{\ell_2} 2^{-2\ell_2}$.
    Conditioning on having an arc between $m_1+1$ and $m_1+m_2+2$ in the upper matching,
    we also want $0$ and $m_1+m_2+m_3+3$ to be connected.
    This happens if and only if the restriction of $\bm w^a_\infty$ to $$\{1,\dots,m_1,m_1+m_2+3,\dots,m_1+m_2+m_3+2\}$$ is well-parenthesized;
    this is an event of probability $\Cat_{\ell_1+\ell_3} 2^{-2\ell_1-2\ell_3}$. 
    Continuing this way, we see that 
    we get one term $\Cat_{\ell_{I(F)}} 2^{-2\ell_{I(F)}}$ for each face $F$ of the matching.
\end{itemize}
This proves that the probability of the event \eqref{eq:Event} is indeed given by \eqref{eq:Proba}
and ends the proof of the lemma.
\end{proof}

Using the lemma and Mathematica to evaluate the infinite sums,
it is possible to evaluate $\P (S_0=C)$ for simple meanders.
\begin{itemize}
  \item for $C=\begin{array}{c}
\begin{tikzpicture}[scale=.4,font=\small]
\draw (0.7,0) -- (2.3,0);
\draw (1,.1) -- (1,-.1) (2,.1) -- (2,-.1) ;
\draw (2,0) arc (0:360:.5);
\end{tikzpicture}\end{array}$ (which is the only meander of size 2),
we have
\[\P (S_0=C) = \frac{1}{8} \sum_{\ell=0}^\infty \Cat_\ell^2 2^{-4\ell} = \frac{2}{\pi}-\frac{1}{2} \approx 0.137\]
\item for $C=\begin{array}{c}                                 
\begin{tikzpicture}[scale=.3,font=\small]                       
\draw (0.7,0) -- (4.3,0);                                       
\draw (1,.1) -- (1,-.1) (2,.1) -- (2,-.1) (3,.1) -- (3,-.1) (4,.1) -- (4,-.1) ;
\draw (4,0) arc (0:180:1.5);                                     
\draw (3,0) arc (0:180:.5);
\draw (1,0) arc (180:360:.5);                                     
\draw (3,0) arc (180:360:.5);                                     
\end{tikzpicture}\end{array}$
(which is the only meander of size $4$, up to vertical symmetry), we have

\begin{align}
  \P(S_0=C)&=\frac{1}{64} \cdot \left( \sum_{\ell_2 \ge 0} \Cat_{\ell_2} 2^{-2\ell_2}\right)
\cdot  \left(  \sum_{\ell_1,\ell_3 \ge 0}
 \Cat_{\ell_1}\Cat_{\ell_3} \Cat_{\ell_1+\ell_3}
    2^{-4\ell_1-4\ell_3} \right)   \nonumber\\
    &=\frac{1}{64} \cdot 2 \cdot \left( 8 -\frac{64}{3\pi} \right)
    =\frac14-\frac2{3\pi} \approx 0.038 \label{eq:facto_example}
  \end{align}
\end{itemize}
Probabilities $\P(S_0=C)$ for larger meanders $C$
do not seem to have such compact expressions in $\mathbb Q(\pi)$.

\begin{remark}
The factorization occurring in \cref{eq:facto_example} is an instance of a general fact.
Indeed, each face $F$ of a meander $C$ contains either only even numbers
or only odd numbers. 
We write $\mathcal F_e(C)$ and $\mathcal F_o(C)$ for the sets of faces
of $C$ with even and odd numbers, respectively. We have
\begin{multline*}\sum_{\ell_1,\dots,\ell_{2k-1} \ge 0}  \left( \prod_{F \!\in\ \mathcal F(C)}
 \Cat_{\ell_{I(F)}} 2^{-2\ell_{I(F)}} \right)
 = \left(\sum_{\ell_1,\ell_3,\dots,\ell_{2k-1} \ge 0} 
  \prod_{F \!\in\ \mathcal F_o (C)}
 \Cat_{\ell_{I(F)}} 2^{-2\ell_{I(F)}} \right)  \\
 \cdot
\left(\sum_{\ell_2,\ell_4,\dots,\ell_{2k-2} \ge 0} 
  \prod_{F \!\in\ \mathcal F_e (C)}
 \Cat_{\ell_{I(F)}} 2^{-2\ell_{I(F)}} \right).
 \end{multline*}
 This factorization yields sums over smaller index sets,
 which are easier to approximate numerically than the original sum.
\end{remark}

Plugging \cref{eq:proba_meandre} into \cref{eq:decompo_kappa}, 
and using the above remark, gives the following formula for $\kappa$:
\begin{multline*}
  \kappa=\sum_{k=1}^{\infty} 2^{-4k+1}\sum_{C \in M^{(1),k}}
\left(\sum_{\ell_1,\ell_3,\dots,\ell_{2k-1} \ge 0} 
  \prod_{F \!\in\ \mathcal F_o (C)}
 \Cat_{\ell_{I(F)}} 2^{-2\ell_{I(F)}} \right) \\
 \cdot
\left(\sum_{\ell_2,\ell_4,\dots,\ell_{2k-2} \ge 0} 
  \prod_{F \!\in\ \mathcal F_e (C)}
 \Cat_{\ell_{I(F)}} 2^{-2\ell_{I(F)}} \right).
 \end{multline*}
Replacing infinite sums by some truncations allows to find lower bounds on $\kappa$.
We have implemented this formula in Sage \cite{SageMath} (the code is available upon request) and found that $\kappa \ge 0.207$.

\subsection{Upper bounds}

In order to obtain upper bounds on $\kappa$, remark first that we can rewrite its expression in terms of the following event
\enquote{the point $0$ is the leftmost point of the connected component $C_0(\bm M_\infty)$ in the infinite noodle}.
We call this event $L_0$.

\begin{lemma}
\label{lem:probagauche}
We have
\begin{align*}
\kappa := \E \left[\frac{2}{|C_0(\bm M_\infty)|} \right] = 2 \, \P \left( L_0 \right)
\end{align*}
\end{lemma}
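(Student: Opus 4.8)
The plan is to relate the expectation $\E[2/|C_0(\bm M_\infty)|]$ to the probability of the event $L_0$ via a translation-invariance (or mass-transport) argument, essentially the same bookkeeping as in \cref{lem:ccM_As_Expectation} but carried out on $\Z$ instead of a finite cyclic set. First I would condition on the component $C_0(\bm M_\infty)$ being finite, since on the event that it is infinite both $2/|C_0(\bm M_\infty)|$ (by the convention $2/\infty=0$) and the contribution to $\P(L_0)$ (there is no leftmost point of an infinite-to-the-left component; and in fact the infinite noodle's component of $0$, if infinite, is bi-infinite since each matching is a.s.\ made of finite arcs only — wait, this needs the same recurrence argument as in the proof of \cref{thm:cv_proba_ccMn}) vanish, so the identity reduces to the finite case.

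On the finite-component event, write $E = C_0(\bm M_\infty) \subset \Z$ for the set of points of the component of $0$, a finite set of even cardinality $|E|$. For each $j \in \Z$ let $L_j$ be the event ``$j$ is the leftmost point of its own component''. By translation invariance of the law of $\bm M_\infty$, $\P(L_j) = \P(L_0)$ for every $j$. Now I would use a mass-transport / re-centering identity: conditionally on the configuration, the component $C_0(\bm M_\infty)$ contains exactly one leftmost point $\ell = \min(E)$, and
\[
\frac{2}{|C_0(\bm M_\infty)|} = \sum_{j \in E} \frac{2}{|E|}\,\mathbbm 1[j = \min(E)] \cdot |E| \Big/ |E| ,
\]
which is just a clumsy way of saying: the event $L_0$ occurs iff $0 = \min(C_0(\bm M_\infty))$, and on the finite-component event exactly one of the $|E|$ translates ``$C_{-j}$ has leftmost point $0$'' holds. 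Cleanly: by translation invariance,
\[
\P(L_0) = \P\big(0 = \min C_0(\bm M_\infty)\big) = \E\Big[\tfrac{1}{|C_0(\bm M_\infty)|}\Big],
\]
because for a bi-infinite i.i.d.\ environment the ``density of leftmost points of components'' equals the expected reciprocal component size — this is the standard fact that for a stationary partition of $\Z$ into a.s.\ finite blocks, $\P(0 \text{ is the left endpoint of its block}) = \E[1/|{\rm block}(0)|]$, proved by writing $\mathbbm 1[0 = \min {\rm block}(0)] = \sum_{k \ge 1}\mathbbm 1[{\rm block}(0) = \{0,\dots\}, |{\rm block}(0)|=k]$ and using that over a window of length $N$ the number of blocks is $\sum_{j} \mathbbm 1[j = \min{\rm block}(j)]$ while also $\sum_j 1/|{\rm block}(j)|$ counts each block once, then taking expectations and invoking stationarity (ergodic averaging is not even needed — a one-line exchange of expectation and the telescoping in \cref{lem:ccM_As_Expectation} suffices once we note both sides are shift-invariant). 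Multiplying by $2$ gives $\kappa = 2\,\P(L_0)$.

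The main obstacle is purely a matter of making the ``counting on $\Z$'' rigorous without an ergodic theorem: on a finite interval the identity ``number of blocks $= \sum 1/(\text{block size})$'' is exact (that is precisely \cref{lem:ccM_As_Expectation}), but on $\Z$ one must justify passing to densities. The clean route, which I would take, is to avoid densities altogether: observe $\mathbbm 1[L_0] = \mathbbm 1[0 = \min C_0(\bm M_\infty)]$ pointwise, and that conditionally on $|C_0(\bm M_\infty)| = k < \infty$ with prescribed point set $E = \{e_1 < \dots < e_k\}$, translation invariance gives that each of the $k$ events $\{0 = e_1 - (e_i - 0)\text{-shift}\}$... more simply: $\E[\mathbbm 1[L_0] \mid |C_0| = k] = 1/k$ by symmetry among the $k$ points of the component under the shift action (each point of a size-$k$ component is equally likely, under the Palm-type re-rooting at $0$, to be the one sitting at position $0$). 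Hence $\P(L_0) = \sum_{k\ge 1} \tfrac1k \P(|C_0(\bm M_\infty)| = k) = \E[1/|C_0(\bm M_\infty)|]$ (the $k=\infty$ term contributing $0$ on both sides), and $\kappa = 2\P(L_0)$ follows. I expect the write-up to spend most of its length justifying this symmetry cleanly; everything else is immediate from the definition \eqref{eq:kappa} and translation invariance of $\bm M_\infty$.
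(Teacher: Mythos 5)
Your proposal is correct and follows essentially the same route as the paper: by translation invariance of $\bm M_\infty$, conditionally on $|C_0(\bm M_\infty)|=k<\infty$ the $k$ events ``$0$ is the $i$-th smallest point of its component'' are equiprobable, giving $\P(L_0\mid |C_0|=k)=1/k$ and hence $\P(L_0)=\E[1/|C_0(\bm M_\infty)|]$, with the infinite-component case contributing $0$ to both sides by the same invariance argument (this is exactly the paper's appeal to \cref{eq:proba_0AGauche}). The detours through mass transport and densities of leftmost points are unnecessary, as you yourself conclude, and the ``bi-infinite'' claim for infinite components is neither needed nor justified as stated, but the final symmetry argument you settle on is the one the paper uses.
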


\begin{proof}
From \cref{eq:proba_0AGauche}, we know that, conditionally on the size $|C_0(\bm M_\infty)| \in \mathbb N \cup \{\infty\}$ of the connected component of $0$, the probability that $0$ is the leftmost point of the component is exactly $1/|C_0(\bm M_\infty)|$
(the argument is given for $|C_0(\bm M_\infty)|<+\infty$, but is easily adapted to the infinite case
to show that infinite components, if they exist, do not have leftmost points a.s.~).
This implies the lemma.
\end{proof}

In order to see whether $0$ is the leftmost point in its component or not, we discover the component step by step. More precisely, for any $k \geq 0$, define the event $E_k$: $0$ is not the leftmost point of its connected component restricted to $(-\infty,k \rrbracket$. In other words, one of the paths starting from $0$ visits $(-\infty, -1 \rrbracket$ before visiting $\llbracket k+1,+\infty)$. Observe that the event $E_k$ only depends on the value of $(\bm w^a_\infty, \bm w^b_\infty)$ on $\llbracket 0, k \rrbracket$. 
Clearly, denoting $\overline{L_0}$ the complement of the event $L_0$,
we have $\overline{L_0}=\bigcup_{k \ge 0} E_k$.
Assuming $\overline{L_0}$, one can thus define
\begin{align*}
K := \inf \{ k \geq 0, E_k \}.
\end{align*}
In particular, the event $E_0$ holds if and only if either $\bm w^a_\infty(0)$ or $\bm w^b_\infty(0)$ is $R$ (which happens with probability $3/4$).
Suppose now that $K\ge 1$.
Then we define the \textit{partial shape} $P_0$ of the component of $0$ as follows: let $E \subset \Z_+$ be the connected component of $0$ in the restriction $\bm M_\infty / \llbracket 0;K \rrbracket$. There exists a unique bijection $g: E \rightarrow \llbracket 0, |E|-1 \rrbracket$. The partial shape is the partial meandric system on $\llbracket 0, |E|-1 \rrbracket$ obtained from the pair $(\bm w^a, \bm w^b)$ on $\{ L,R \}^{|E|}$ such that, for all $0 \leq i \leq |E|-1$, we set $\bm w^a(i)= \bm w^a_\infty(g^{-1}(i))$ and $\bm w^b(i)=\bm w^b_\infty(g^{-1}(i))$. In particular, $P_0$ is a partial meandric system with only one connected component.
Moreover, since the event $E_K$ holds, $P_0$ has at least one singleton either in the lower or upper matching
(coming from an arc connecting $\llbracket 0;K \rrbracket$ to $(-\infty, -1 \rrbracket$).

\begin{lemma}
\label{lem:prop_P0}
Assume that $\overline{L_0}$ holds.
Then, $P_0$ satisfies the following conditions.
\begin{itemize}
\item[(i)] either $K=0$, or the restriction of $P_0$ to $\llbracket 0, |E|-2 \rrbracket$ does not have
$0$ and a singleton $R$ in the same connected component.
 \item[(ii)] Each of the words $\bm w^a$ and $\bm w^b$ of $P_0$ has exactly one singleton, one of them being $L$ and the other $R$.
\item[(iii)] Necessarily, $P_0$ has odd size.
\end{itemize}
\end{lemma}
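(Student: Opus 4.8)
The plan is to read the structure of $P_0$ off the walk encoding of $C_0(\bm M_\infty)$. Since $\bm M_\infty$ is a.s.\ complete (shown in the proof of \cref{thm:cv_proba_ccMn}), $C_0(\bm M_\infty)$ is either a bi-infinite path or a finite cycle, and each integer is incident to exactly one upper and one lower arc; write $C_0(\bm M_\infty)=(x_i)$ with $x_0=0$ and consecutive vertices joined alternately by upper and lower arcs. The vertex set $E$ of $P_0$ is precisely the maximal sub-path $x_\alpha,\dots,x_\beta$ through $x_0$ whose vertices all lie in $\llbracket 0,K\rrbracket$, and its two ``dead-end'' arcs $\{x_{\alpha-1},x_\alpha\}$ and $\{x_\beta,x_{\beta+1}\}$ (those leaving $\llbracket 0,K\rrbracket$) are exactly the arcs giving the singletons of $P_0$. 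Because $K\ge 1$, the event $E_0$ fails, i.e.\ $\bm w^a_\infty(0)=\bm w^b_\infty(0)=L$, so both arcs at $0$ point right; this rules out $E=\{0\}$ (otherwise both arcs at $0$ escape to the right of $K$ and $0$ stays leftmost of its component in $\bm M_\infty/(-\infty,K\rrbracket$, contradicting $E_K$), hence $|E|\ge 2$. So $P_0$ is a genuine path with exactly two endpoints $x_\alpha,x_\beta$, hence exactly two singletons, one at each endpoint, produced by the two dead-end arcs. Also $\max E=K$ and $g(0)=0$, so $g(K)=|E|-1$.

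For (i): restricting $P_0$ to $\llbracket 0,|E|-2\rrbracket$ deletes the vertex $g(K)$, and the component of $0$ in the result equals the component $E'$ of $0$ in $\bm M_\infty/\llbracket 0,K-1\rrbracket$. Since $K=\inf\{k\ge 0:E_k\}$ and $K\ge 1$, the event $E_{K-1}$ fails, so the component of $0$ in $\bm M_\infty/(-\infty,K-1\rrbracket$ contains no negative integer; a fortiori no vertex of $E'$ is incident to an arc reaching a negative integer, i.e.\ no vertex of $E'$ carries an $R$-decorated singleton, which is exactly (i).

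Next, the decorations of the two singletons. At least one dead-end arc leaves $\llbracket 0,K\rrbracket$ to the left (to a negative integer), for otherwise $0$'s component in $\bm M_\infty/(-\infty,K\rrbracket$ would stay non-negative, contradicting $E_K$; a leftward dead-end gives an $R$-singleton (its base is the right end of the arc) and a rightward one an $L$-singleton. To exclude that \emph{both} dead-ends go left: if they did, then (using again that $E_{K-1}$ fails) $E'\subseteq\llbracket 0,K-1\rrbracket$ and neither dead-end arc of $E'$ reaches a negative integer, while (by assumption) no arc out of $E\supseteq E'$ reaches a value $>K$; hence both dead-end arcs of $E'$ must attach to $K$. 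As $K$ is incident to only one upper and one lower arc, these are precisely the two arcs at $K$, so $C_0(\bm M_\infty)=E'\cup\{K\}$ is a finite cycle inside $\llbracket 0,K\rrbracket$, contradicting $\overline{L_0}$ (which forces $C_0(\bm M_\infty)$ to meet $\{\dots,-1\}$). Hence exactly one dead-end goes left and one goes right, so one singleton is $R$ and the other $L$.

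Finally (iii) and the ``exactly one singleton per word'' part of (ii): along the path $(x_i)$ arc types alternate, so the two dead-end arcs lie in different matchings iff $\beta-\alpha$ is even iff $|E|$ is odd; proving this is the step I expect to be the main obstacle. The strategy: assume for contradiction that both dead-end arcs are, say, upper. Then every vertex of $E$ has its lower arc inside $E$, so the lower matching restricted to $E$ is a perfect matching and $|E|$ is even. Writing $\{c_\ell,b_\ell\}$ (leftward, $c_\ell<0\le b_\ell\le K$) and $\{b_r,c_r\}$ (rightward, $0\le b_r\le K<c_r$) for the two upper dead-end arcs, non-crossingness of the upper matching forces $b_\ell<b_r$, and in particular $b_\ell\ge 1$ (else $\bm w^a_\infty(0)=R$). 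The minimality of $K$ pins the position of $K=\max E$ on the path: it cannot be the rightward base $b_r$ (deleting $K$ would then leave a component of $0$ in $\bm M_\infty/\llbracket 0,K-1\rrbracket$ still carrying the leftward arc, so $E_{K-1}$ would hold); if it is the leftward base $b_\ell$, then $b_r\le K=b_\ell$ contradicts $b_\ell<b_r$; so $K$ is an interior vertex of the path and both arcs at $K$ point left. Feeding the (leftward) upper arc at $K$ and the (rightward) upper arc at $0$ into the non-crossing constraints together with $\{c_\ell,b_\ell\}$ and $\{b_r,c_r\}$ forces the ordering $0<p<b_\ell<b_r<m<K$ (where $p,m$ are the upper partners of $0$ and $K$), after which a more detailed case analysis---again driven by the minimality of $K$ and by whether the internal arc at $K$ linking to $E'$ is the upper or the lower one, possibly iterated to the smaller configuration $E'$---produces a forbidden crossing. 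Granting this, the two dead-end arcs have opposite types, $|E|$ is odd, and each of $\bm w^a,\bm w^b$ carries exactly one singleton. The delicate point throughout is bookkeeping, via the minimality of $K$, of where $\max E$ sits and which arc at $\max E$ is upper; everything else reduces to elementary facts about restrictions of (partial) meandric systems.
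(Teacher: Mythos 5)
There is a genuine gap, and you flag it yourself: the claim that the two singletons of $P_0$ lie in \emph{different} matchings (one in $\bm w^a$, one in $\bm w^b$) is never proved. You reduce it to "both dead-end arcs are upper leads to a forbidden crossing," set up some notation, and then write "Granting this\dots". Since item (iii) (odd size) is an immediate consequence of this very claim, the proposal as written establishes only item (i), the count of two singletons total, and their $L$/$R$ decorations -- not the heart of item (ii) nor item (iii). Moreover, the route you sketch for closing the gap (tracking where $\max E=K$ sits on the path, which of its two arcs is upper, and iterating to $E'$) looks considerably more involved than necessary and is not obviously convergent as a case analysis.

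The paper closes this step with a short planarity argument that you already have all the ingredients for. From item (i), deleting the vertex $|E|-1$ from the path $P_0$ separates $0$ from the $R$-singleton $s_1$, so the order of visits along the path is $s_1,\dots,|E|-1,\dots,0,\dots,s_2$ (whence also $s_2$ carries the $L$, as you derive differently via the "both dead-ends go left" exclusion). If both open arcs were on the same side of the axis, the path would have to reach the rightmost point $|E|-1$ after leaving $s_1$ through an unbounded arc opening to the left, then reach the leftmost point $0$, then exit at $s_2$ through an unbounded arc opening to the right, all with that side's arcs non-crossing -- which is impossible. Your arguments for item (i) and for the decorations are correct and essentially parallel to the paper's (the paper gets the $L$ on $s_2$ directly from item (i) rather than from your "finite cycle inside $\llbracket 0,K\rrbracket$ contradicts $\overline{L_0}$" argument, but both work), so the only thing missing is this one planarity step; I would replace your unfinished case analysis with it.
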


\begin{proof}
All three items are straightforward if $K=0$. Let us therefore assume in the proof that $K \geq 1$.

Item (i) follows by minimality of $K$. Indeed, if item (i) is not satisfied,
then $0$ is connected to $(-\infty, -1 \rrbracket$ in the restriction  $\bm M_\infty / \llbracket 0;K-1 \rrbracket$, contradicting the minimality of $K$.

Let us first prove (ii). Since $P_0$ is connected and has at least one singleton, then necessarily the total number of singletons is exactly $2$. Except if both are at position $0$, it is clear that they have to be at different positions in $P_0$, say $s_1$ and $s_2$. 
Furthermore, necessarily one of them, say $s_1$
 (we do {\bf not} assume $s_1 <s_2$), has to take the value $R$ by definition.
Then, by item (i), $|E|-1$ should be between $s_1$ and $0$ when we follow $P_0$.
Therefore, it cannot be between $s_2$ and $0$ ($s_1$ and $s_2$ are the extremities of $P_0$), so that $s_2$ is already connected to $0$ in $P_0$ restricted to $\llbracket 0, |E|-2 \rrbracket$. Using item (i), we conclude that the singleton in $s_2$ has value $L$.

We still need to prove that $s_1$ and $s_2$ are not singletons 
on the same side of the real line.
Assume it is the case. Then $P_0$ visits first $s_1$ with the value $R$,
then $|E|-1$, then $0$ and finally $s_2$ with the value $L$ on the same side of the real line.
This is impossible without crossing arches. Thus, (ii) holds.

 Item (iii) follows directly, since matchings with exactly one singleton necessarily have odd size.
\end{proof}

For any $k \geq 1$, we denote by $\cP_k$ the set of partial meanders on $\llbracket 0, 2k \rrbracket$ satisfying the three conditions of \cref{lem:prop_P0}.
The set $\cP_k$ is a subset of what are usually called open meanders of odd size.
We can write:

\begin{proposition}
\label{prop:formulaUpper}
\begin{equation}\label{eq:UpperBound}
1-\frac{\kappa}{2} = \P\big( \, \overline{L_0}\, \big)
=\frac{3}{4} + \sum_{k \geq 1} \sum_{P \in \cP_k} \P \left( P_0=P\right).
\end{equation}
\end{proposition}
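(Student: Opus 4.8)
The plan is to partition the event $\overline{L_0}$ according to the value of $K$, treating $\{K=0\}$ by a direct computation and refining $\{K\ge 1\}$ according to the partial shape $P_0$, which by \cref{lem:prop_P0} is forced to lie in $\bigsqcup_{k\ge 1}\cP_k$. The leftmost identity is immediate from \cref{lem:probagauche}: since $\kappa=2\,\P(L_0)$, we have $1-\kappa/2=1-\P(L_0)=\P(\overline{L_0})$. For the series, recall $\overline{L_0}=\bigcup_{k\ge 0}E_k$, so on $\overline{L_0}$ the random variable $K=\inf\{k\ge 0:E_k\}$ is a.s.\ finite and $\overline{L_0}=\{K=0\}\sqcup\{K\ge 1\}$. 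The first piece has probability $3/4$: in the restriction $\bm M_\infty/(-\infty,0\rrbracket$, if an arc at $0$ joins it to a negative integer then $0$ is not the leftmost point of its component, while if both arcs at $0$ reach positive integers then $0$ is isolated, hence leftmost; thus $E_0$ holds if and only if $\bm w^a_\infty(0)=R$ or $\bm w^b_\infty(0)=R$, an event of probability $1-(1/2)^2=3/4$ since the entries of $\bm w^a_\infty$ and $\bm w^b_\infty$ are i.i.d.\ balanced.

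On $\{K\ge 1\}$ the partial shape $P_0$ is, by construction, a finite partial meandric system with a single connected component, and \cref{lem:prop_P0} shows it satisfies conditions (i)--(iii); in particular it has odd size. Moreover $K\ge 1$ forces $E_0$ to fail, i.e.\ $\bm w^a_\infty(0)=\bm w^b_\infty(0)=L$, so were $P_0$ of size $1$ its unique point would inherit these two values, contradicting (ii), which demands one $L$- and one $R$-singleton. Hence $P_0$ has size $2k+1$ with $k\ge 1$, i.e.\ $P_0\in\cP_k$ for the integer $k=(|P_0|-1)/2\ge 1$. Since $P_0$ is a measurable function of $(\bm w^a_\infty,\bm w^b_\infty)$ defined exactly on $\{K<\infty\}$, this gives the disjoint decomposition $\{K\ge 1\}=\bigsqcup_{k\ge 1}\bigsqcup_{P\in\cP_k}\{P_0=P\}$, whence
\[
\P(\overline{L_0})=\P(K=0)+\sum_{k\ge 1}\sum_{P\in\cP_k}\P(P_0=P)=\tfrac34+\sum_{k\ge 1}\sum_{P\in\cP_k}\P(P_0=P),
\]
which is exactly \cref{eq:UpperBound}.

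The step I expect to require the most care is verifying that this decomposition is genuinely a partition of $\overline{L_0}$: disjointness is clear since $P_0$ is a function, but exhaustiveness — that on $\overline{L_0}$ one always lands in exactly one of the listed events, and in particular that no degenerate partial shape of size $1$ arises — relies on $\overline{L_0}=\bigcup_k E_k$ (which forces $K<\infty$) together with \cref{lem:prop_P0} and the size-$1$ exclusion above. The remaining points (measurability of $\{P_0=P\}$ and the elementary value $\P(E_0)=3/4$) are routine.
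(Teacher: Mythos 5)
Your proof is correct and follows essentially the same route as the paper, which obtains \eqref{eq:UpperBound} directly from \cref{lem:probagauche}, the computation $\P(E_0)=3/4$, and the decomposition of $\{K\ge 1\}$ according to the partial shape $P_0$, whose membership in $\bigsqcup_{k\ge1}\cP_k$ is guaranteed by \cref{lem:prop_P0}. Your explicit exclusion of a size-$1$ partial shape on $\{K\ge1\}$ (via $\bm w^a_\infty(0)=\bm w^b_\infty(0)=L$ contradicting condition (ii)) is a detail the paper leaves implicit, and it is a valid one.
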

The term $3/4$ corresponds to the probability that $K=0$.
We note that keeping only this term gives $1-\frac\kappa{2} \ge \frac34$,
 i.e.~$\kappa \le \frac12$,
which is exactly the upper bound provided by Goulden, Nica and Puder \cite{GouldenMeandric}. 

Similarly to $\P \left( S_0=C \right)$ above, we can obtain a summation formula for
 $\P \left( P_0=P \right)$. 
 Let $P$ be in $\cP_k$. As for meanders, we denote $\mathcal F(P)$ its set of faces,
 i.e. {\em bounded regions} delimited by the arcs and the real line.
 We  also define two {\em open faces} as follows.
 Assume (up to symmetry) that there exists $s_1, s_2 \leq 2k$ such that $\bm w^a(s_1)=L$ and $\bm w^b(s_2)=R$.
 We complete $P$ with an arc in the upper half-plane from $s_1$ to $2k+1$
 and an arc in the lower half-plane from $-1$ to $s_2$.
 This creates two new faces that we denote respectively $F_\infty$ and $F_0$. See Fig. \ref{fig:partialshape}.

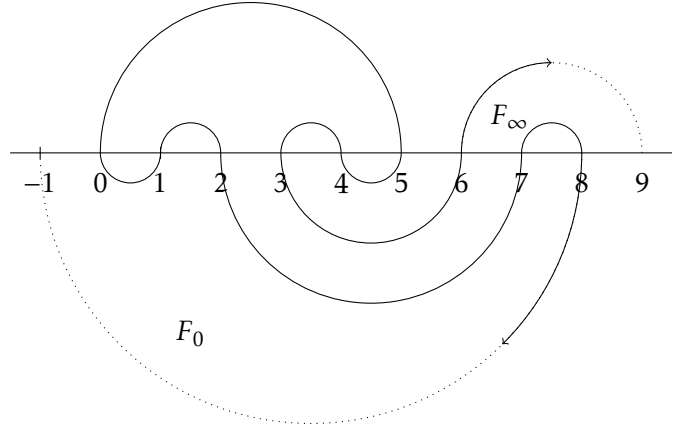
\begin{figure}
\begin{tikzpicture}[scale=.8,font=\small]    
\draw (-1,-.5) node{$-1$};   
\draw (0,-.5) node{$0$};   
\draw (1,-.5) node{$1$};   
\draw (2,-.5) node{$2$};  
\draw (3,-.5) node{$3$}; 
\draw (4,-.5) node{$4$}; 
\draw (5,-.5) node{$5$}; 
\draw (6,-.5) node{$6$}; 
\draw (7,-.5) node{$7$}; 
\draw (8,-.5) node{$8$}; 
\draw (9,-.5) node{$9$};                    
\draw (-1.5,0) -- (9.5,0);                                       
\draw (1,.1) -- (1,-.1) (-1,.1) -- (-1,-.1) ;
\draw (1,0) arc (0:-180:.5);
\draw (2,0) arc (0:180:.5);
\draw (4,0) arc (0:180:.5);
\draw (5,0) arc (0:-180:.5);
\draw (5,0) arc (0:180:2.5);
\draw (6,0) arc (0:-180:1.5);
\draw (7,0) arc (0:-180:2.5);
\draw (8,0) arc (0:180:.5);
\draw[dotted] (8,0) arc (0:-180:4.5);
\draw[dotted] (9,0) arc (0:180:1.5);
\draw[->] (8,0) arc (0:-45:4.5);
\draw[->] (6,0) arc (180:90:1.5);
%\draw[->] (-1,0) arc (180:270:.75);                                     
%\draw (1,0) arc (0:180:1);                                     
%\draw (3,0) arc (0:-180:1);                                     
%\draw[->] (3,0) to [bend right=45] (-1,2) ;
%\draw[->] (-1,0) to [bend left=-45] (3,-2) ;
%\draw[dotted] (-1,2) to [bend right=45] (-3,0) ;
%\draw[dotted] (3,-2) to [bend left=-45] (5,0) ;

\draw (6.8,.6) node{$F_\infty$};
\draw (1.5,-3) node{$F_0$}; 

\end{tikzpicture}
\caption{A possible partial shape on $\llbracket 0, 8 \rrbracket$ and the two added virtual open faces $F_0$ and $F_\infty$.}
\label{fig:partialshape}
\end{figure}

Before stating our formula, we recall a classical result of enumerative combinatorics (see e.g. \cite[Ex.$6.19$, p.$219$]{Sta99}).
For any $n \geq 0$, we denote by $N_n$ the number of simple walks $S: \llbracket 0, n \rrbracket \rightarrow \Z$ such that $S_0=0$ and $S \geq 0$. 

\begin{lemma}
\label{thm:stanley}
For all $n \geq 1$, we have
\begin{align*}
N_n = \binom{n}{\lfloor n/2 \rfloor}.
\end{align*}
\end{lemma}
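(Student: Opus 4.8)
The plan is to count the walks $S:\llbracket 0,n\rrbracket\to\Z$ with $S_0=0$ and $S\ge 0$ by a standard reflection/cycle-lemma argument, or alternatively by an elementary bijection with central binomial paths. First I would record the basic parity split: such a walk makes $n$ unit steps, and if it has $u$ up-steps and $d$ down-steps then $u+d=n$, while the endpoint is $S_n=u-d\ge 0$, so $u\ge d$; the condition $S\ge 0$ throughout is the nontrivial constraint. The cleanest route is a bijection with \emph{all} lattice paths of length $n$ having at least $\lceil n/2\rceil$ up-steps, whose number is $\sum_{u\ge\lceil n/2\rceil}\binom{n}{u}$; one then observes that by the symmetry $u\leftrightarrow d$ this equals $\binom{n}{\lfloor n/2\rfloor}$ (for $n$ odd the two halves are equal and each is half of $2^n$ minus nothing; for $n$ even one adds back the central term). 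So the real content is the identity
\begin{equation*}
N_n=\#\{\text{paths of length }n\text{ with }\ge\lceil n/2\rceil\text{ up-steps}\}.
\end{equation*}

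To prove this I would use the classical reflection bijection underlying the ballot problem. Given a path $S$ of length $n$ with $u$ up-steps and $d=n-u$ down-steps, let $m=\min_{0\le i\le n}S_i$. If $m\ge 0$ the path is already nonnegative. If $m<0$, let $\tau$ be the \emph{last} time $S$ attains the value $m$ (equivalently the last time it hits its running minimum for the last time before returning); reflect the portion after $\tau$, or more precisely apply the standard ``cycle lemma''-style operation that sends $S$ to a nonnegative path while keeping track of $m$. The book-keeping is cleanest with the Lindström–Gessel–Viennot-free version: the number of paths of length $n$ from $0$ to endpoint $2u-n$ that stay $\ge 0$ equals $\binom{n}{u}-\binom{n}{u+1}$ (the Catalan-type ballot count), and summing over all admissible endpoints $2u-n\ge 0$, i.e.\ $u\ge\lceil n/2\rceil$, telescopes:
\begin{equation*}
N_n=\sum_{u\ge \lceil n/2\rceil}\Bigl(\binom{n}{u}-\binom{n}{u+1}\Bigr)=\binom{n}{\lceil n/2\rceil}=\binom{n}{\lfloor n/2\rfloor},
\end{equation*}
the last equality by the symmetry of binomial coefficients. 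I would present the ballot count $\binom{n}{u}-\binom{n}{u+1}$ either as a cited classical fact (it is exactly what Stanley's reference provides) or re-derive it in one line by the reflection principle: paths from $0$ to $2u-n$ that touch $-1$ are in bijection with \emph{all} paths from $-2$ to $2u-n$, of which there are $\binom{n}{u+1}$.

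The main obstacle — really the only place any care is needed — is handling the parity of $n$ uniformly so that the telescoping sum starts at the correct index and the final answer comes out as $\binom{n}{\lfloor n/2\rfloor}$ rather than $\binom{n}{\lceil n/2\rceil}$; these agree by symmetry, but one must check the endpoint condition $2u-n\ge 0$ translates to $u\ge n/2$ and hence to $u\ge\lceil n/2\rceil$, and that when $n$ is odd there is no path ending at $0$ to worry about, while when $n$ is even the telescoping lower term at $u=n/2$ is $\binom{n}{n/2}$ and the subtracted tail $\binom{n}{n/2+1}$ cancels correctly against the next term. Everything else is routine, so I would keep the write-up short: state the ballot identity, telescope, invoke binomial symmetry, and note the $n\ge 1$ hypothesis is only needed to make the degenerate $n=0$ case (where $N_0=1=\binom{0}{0}$ still holds anyway) a non-issue.
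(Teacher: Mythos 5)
The paper does not actually prove this lemma: it is quoted as a classical fact with a reference to Stanley, so there is no internal proof to compare against. Your second and third paragraphs do supply a correct and complete proof: the ballot count $\binom{n}{u}-\binom{n}{u+1}$ for nonnegative paths of length $n$ ending at $2u-n\ge 0$ (via reflection at the first passage to $-1$), followed by the telescoping sum over $u\ge\lceil n/2\rceil$ and the symmetry $\binom{n}{\lceil n/2\rceil}=\binom{n}{\lfloor n/2\rfloor}$, is exactly the standard argument the citation points to, and your parity bookkeeping is right.

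However, your first paragraph contains a genuinely false claim that you should delete. The identity $N_n=\#\{\text{paths of length }n\text{ with at least }\lceil n/2\rceil\text{ up-steps}\}$ is wrong: for $n=2$ the right-hand side counts $UU$, $UD$, $DU$, giving $3$, while $N_2=2$. Likewise $\sum_{u\ge\lceil n/2\rceil}\binom{n}{u}$ is \emph{not} $\binom{n}{\lfloor n/2\rfloor}$ (for $n=3$ it equals $2^{2}=4\ne 3$). The classical bijective shortcut you seem to be half-remembering is between nonnegative walks of length $n$ and walks of length $n$ ending at height $0$ or $1$ (according to parity), of which there are indeed $\binom{n}{\lfloor n/2\rfloor}$; the condition is on the \emph{endpoint}, not on the number of up-steps. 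Since your actual derivation never uses the false identity, the fix is simply to strike the first paragraph (or replace it with the correct endpoint formulation) and keep the telescoping proof as written.
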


This helps us get the following:

\begin{proposition}
\label{lem:uppercomputation}
For any $k \geq 1$ and any element $P \in \cP_k$, we have (keeping the notation of  \cref{lem:lowercomputation})
\begin{align*}
\P \left( P_0 = P \right) = 2^{-4k-2} \sum_{\ell_0, \ldots, \ell_{2k-1} \geq 0} \left( \prod_{F \in \cF} Cat_{\ell_{I(F)}} 2^{-2\ell(F)} \right) \times  \binom{2\ell_{I(F_0)}}{\ell_{I(F_0)}} \, 2^{-2\ell_{I(F_0)}}  \times  \binom{2\ell_{I(F_{\infty})}}{\ell_{I(F_{\infty})}} \, 2^{-2\ell_{I(F_{\infty})}}
\end{align*}
where, by convenience, we have set $\ell_{-1}=\ell_{2k}=0$.
\end{proposition}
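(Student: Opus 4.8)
The strategy is to mimic the argument of \cref{lem:lowercomputation} (\cref{lem:lowercomputation}) exactly, tracking the extra contributions coming from the two singletons of $P_0$. Fix $k\ge 1$ and $P\in\cP_k$. As in the proof of \cref{lem:lowercomputation}, I would first condition on the set $E$ of points of the connected component of $0$ in $\bm M_\infty/\llbracket 0;K\rrbracket$. By \cref{lem:prop_P0}, $|E|=2k+1$ is odd, so that, using $\min(E)=0$ together with the parity observation (each gap between consecutive elements of $E$ must be crossed an even number of times by other components), we can write $E=\{0,2\ell_1+1,2\ell_1+2\ell_2+2,\dots\}$ for integers $\ell_1,\dots,\ell_{2k}\ge 0$. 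Then I would compute the probability of the analogue of event \eqref{eq:Event}, namely $(P_0=P)\wedge(\min E=0)\wedge(E=\{\dots\})$, for fixed values of $\ell_1,\dots,\ell_{2k}$, and finally sum over these; the prefactor $2k$ that appeared in \eqref{eq:proba_0AGauche} is now absent because $0$ is forced to be the leftmost point by the definition of $P_0$ (there is no translation-invariance averaging to perform — equivalently, the event $\{K\ge 1\}\cap\{P_0=P\}$ already pins $0$ as a distinguished vertex).

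\textbf{The counting on an example.} As in \cref{lem:lowercomputation}, I would carry out the computation on a representative example rather than in full generality. The first factor is $2^{-(4k+2)}$: there are $2k+1$ vertices in $E$, each carrying an upper and a lower arrow whose direction is prescribed by the shape $P$ (together with the two added arcs to $-1$ and to $2k+1$), giving $2(2k+1)=4k+2$ prescribed arrows. Next, for each \emph{bounded} face $F\in\cF(P)$, exactly as before one gets a factor $\Cat_{\ell_{I(F)}}2^{-2\ell_{I(F)}}$, expressing that the word read on the empty positions between the two sides of $F$ must be well-parenthesized (conditionally on the previously established arcs). The new ingredient is the two \emph{open} faces $F_0$ and $F_\infty$. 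Consider $F_\infty$, bounded below by the segment of the real line from the left end of the upper singleton at position $s_1$ up to the rightmost point $2k$ (and by the added arc from $s_1$ to $2k+1$). The empty positions contained in $F_\infty$ number $2\ell_{I(F_\infty)}$, and the upper word restricted to these positions must be such that it, together with the prescribed $L$ at $s_1$, does not close off before reaching the right boundary — i.e. the corresponding $\pm1$-walk must stay nonnegative on the whole stretch, with no return-to-zero constraint at the end. By \cref{thm:stanley}, the number of such walks of length $2\ell_{I(F_\infty)}$ is $\binom{2\ell_{I(F_\infty)}}{\ell_{I(F_\infty)}}$, contributing $\binom{2\ell_{I(F_\infty)}}{\ell_{I(F_\infty)}}2^{-2\ell_{I(F_\infty)}}$; symmetrically for $F_0$ on the lower side. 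Multiplying all these independent contributions and summing over $\ell_0,\dots,\ell_{2k-1}\ge 0$ (with the convention $\ell_{-1}=\ell_{2k}=0$, so that $F_0$ and $F_\infty$ pick up only the relevant interior gaps) yields the claimed formula.

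\textbf{Main obstacle.} The delicate point is the same as in \cref{lem:lowercomputation}: justifying that the various events — "arrows at the vertices of $E$ point the prescribed way", "the word on the gap positions of face $F$ is well-parenthesized", "the word on $F_\infty$ (resp. $F_0$) is a nonnegative walk" — are genuinely independent, so that their probabilities multiply. This requires checking that the sets of positions (and sides, upper vs.\ lower) involved in these constraints are pairwise disjoint, which in turn rests on the structural facts from \cref{lem:prop_P0}: the two singletons lie on opposite sides and have opposite types $L,R$, and $|E|-1$ separates $0$ from the $R$-singleton along $P_0$. In particular one must verify that the "open face" constraints use positions and sides not already used by any bounded face, and that conditioning on the bounded-face arcs being present does not disturb the conditional uniformity of the letters used for the open faces. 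As in the proof of \cref{lem:lowercomputation}, it is cleanest to present this verification on a worked example (e.g.\ the partial shape of \cref{fig:partialshape}), from which the general pattern — one Catalan factor per bounded face, one central-binomial factor per open face, and the global $2^{-4k-2}$ — is transparent.
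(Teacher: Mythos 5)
Your proposal is correct and follows essentially the same route as the paper: condition on the set $E$ of points of the component (with $0$ automatically leftmost, so no translation-averaging factor), get $2^{-(4k+2)}$ from the $2(2k+1)$ prescribed arrows, a Catalan factor per bounded face exactly as in the lower-bound computation, and a central-binomial factor per open face by translating "the singleton is not connected to any point of the open face" into a nonnegative-walk condition counted by the lemma $N_n=\binom{n}{\lfloor n/2\rfloor}$. The independence/conditioning issue you flag as the main obstacle is handled in the paper at the same level of detail (by sequential conditioning on the arcs, face by face), so no further idea is missing.
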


\begin{proof}
We prove this lemma the same way as \cref{lem:lowercomputation}. Fix $k \geq 1$, and take $P$ a partial meander in $\cP_k$. We split the event $P_0=P$, depending on the set of points $E$. As in the proof of \cref{lem:lowercomputation}, we can find integers $m_1, \ldots, m_{2k-1} \geq 0$ such that $E = \{ 0, m_1+1, \ldots, m_{2k-1}+2k-1 \}$, all these integers being even. For all $i \leq 2k-1$, set $m_i=2\ell_i$.

The integers $\ell_1, \ldots, \ell_{2k-1}$ being fixed, we compute the probability of the event
\begin{equation}
\label{eq:eventupperbound}
(P_0=P) \wedge \left( E = \{ 0, m_1+1, \ldots, m_{2k-1}+2k-1 \} \right).
\end{equation}

The factors corresponding to the bounded faces that are not $F_0$ nor $F_{\infty}$ are obtained the same way as in \cref{lem:lowercomputation}. Let us focus on $F_0$. Assume that the singleton $R$ in $P_0$ is in the lower matching, in position $s_1 \in E$. In order for \eqref{eq:eventupperbound} to hold, then $s_1$ must not be connected by $\bm w^b$ to any point of $\llbracket 0,s_1-1 \rrbracket$. Conditionally on other arcs being present, this holds if and only if $s_1$ is not connected by $\bm w^b$ to any other point of $F_0$. For any $i \leq |F_0|$, let $S_i$ be the number of points $x$ among the \textit{last} $i$ points of $F_0$ satisfying $\bm w^b(x)=R$. Then, $s_1$ is not connected to any point of $F_0$ by $\bm w^b$ if and only if $S_i \geq 0$ for all $i \leq |F_0|$.
The term corresponding to $F_0$ follows by \cref{thm:stanley}.
The term corresponding to $F_\infty$ is obtained the same way.
\end{proof}

As an illustration, we make explicit the term corresponding to $k=1$ in \cref{prop:formulaUpper}.
There are two partial matchings in $\cP_1$, which are symmetric from each other
 through the reflection over the $x$-axis.
The corresponding terms in \cref{prop:formulaUpper} are given by
\begin{align*}
\P \left( P_0 = \begin{array}{c}\begin{tikzpicture}[scale=.3,font=\small]    
\draw (-.9,.5) node{$L$};   
%\draw (2.7,.2) node{$R$};                                
%\draw (.8,-.5) node{$1$};   
%\draw (2.7,-.5) node{$2$};    
\draw (2.7,-.6) node{$R$};                                
\draw (3.3,0) -- (-1.3,0);                                    
%\draw (1,.1) -- (1,-.1) (-1,.1) -- (-1,-.1) ;
%\draw[->] (-1,0) arc (180:270:.75);                                     
\draw (3,0) arc (0:180:1);                                     
\draw (1,0) arc (0:-180:1);    
\draw[dashed] (3,0) to [bend left=45] (-1,-2) ;
\draw[dashed] (-1,0) to [bend right=-45] (3,2) ;   
\end{tikzpicture}\end{array}\right)&
=
\P \left( P_0 = \begin{array}{c}\begin{tikzpicture}[scale=.3,font=\small]    
\draw (-.9,-.6) node{$L$};   
%\draw (2.7,.2) node{$R$};                                
%\draw (.8,-.5) node{$1$};   
%\draw (2.7,-.5) node{$2$};    
\draw (2.7,.4) node{$R$};                                
\draw (3.3,0) -- (-1.3,0);                                    
%\draw (1,.1) -- (1,-.1) (-1,.1) -- (-1,-.1) ;
%\draw[->] (-1,0) arc (180:270:.75);                                     
\draw (1,0) arc (0:180:1);                                     
\draw (3,0) arc (0:-180:1);    
\draw[dashed] (3,0) to [bend right=45] (-1,2) ;
\draw[dashed] (-1,0) to [bend left=-45] (3,-2) ;   
\end{tikzpicture}
\end{array} \right) = \frac{1}{64} \sum_{\ell_0,\ell_1 \ge 0} \Cat_{\ell_0} \Cat_{\ell_1} \binom{2\ell_0}{\ell_0} \binom{2\ell_1}{\ell_1} 2^{-4\ell_0-4\ell_1} \\
&=\frac{1}{64} \left( \sum_{\ell_0} \Cat_{\ell_0}\binom{2\ell_0}{\ell_0}  2^{-4\ell_0}\right)^2
= \frac{1}{64} \left( \frac{4}{\pi} \right)^2 =\frac{1}{4\pi^2} \approx 0.025,
\end{align*}
where the third equality was computed with Mathematica.
Hence restricting the sum in the right-hand side of \cref{prop:formulaUpper} to $k = 1$
gives a bound $1-\frac\kappa{2} \ge \frac34+ 2* 0.025 \approx 0.8$, i.e.~$\kappa \le 0.4$.
As for the lower bound, we have implemented the formula in Sage
and obtained a rigourous upper bound~$\kappa \le 0.292$.

\appendix
\section{\revision{Hasse diagram of $NC(n)$.}}
\label{sec:appendix}

\revision{We recall here the definition of the Hasse diagram of the set of non-crossing partitions $NC(n)$ of an integer $n$. We refer to \cite{GouldenMeandric} for more details and results.}

\revision{A partition of an integer $n$ is a set of blocks $\{ V_1, \ldots, V_k\}$, where all blocks are non-empty, disjoint and their union is $\{1, \ldots, n\}$. We write $V \in \pi$ to say that a block $V$ is part of a partition $\pi$. A partition is non-crossing if there is no $i \neq j \in \{1, \ldots, k\}$ and $a<b<c<d$ such that $a, c \in V_i$ and $b,d \in V_j$. We denote by $NC(n)$ the set of non-crossing partitions of $n$.}

\revision{The set $NC(n)$ is endowed with a partial order as follows: for any $\pi, \rho \in NC(n)$, we say that $\pi \leq \rho$ if, for any $V \in \pi$, there exists $W \in \rho$ such that $V \subseteq W$. In other words, $\rho$ can be obtained from $\pi$ by merging blocks. In particular, with this order, $(NC(n), \leq)$ admits a minimum element $0_n$ (the partition of $\{1, \ldots, n\}$ into $n$ blocks of size $1$), and a maximum element $1_n$ (the partition into a unique block of size $n$).}

\revision{For $\rho, \pi \in NC(n)$, we say that $\rho$ covers $\pi$ if $\pi < \rho$ and there is no $\theta$ such that $\pi <\theta <\rho$.
The Hasse diagram of $NC(n)$ is the (undirected) graph whose vertices are the elements of $NC(n)$, and where two vertices $\pi, \rho$ are connected if $\pi$ covers $\rho$ or $\rho$ covers $\pi$.}

\revision{In \cite{GouldenMeandric}, the authors construct a bijection from $NC(n)^2$ to the set of meandric systems of size $n$, as shown in Fig. \ref{fig:bijnc}. Roughly speaking,
they represent a partition $\pi$ by drawing the points $\{ 1, \ldots, n \}$ on a line, and then connecting all points in a given block of $\pi$ to the same external point above the axis, in a noncrossing way. They also do the same for a partition $\rho$ below the axis, which gives a graph $\Gamma(\pi,\rho)$. Finally, drawing one point immediately before and one immediately after each of the integers $\{1, \ldots, n \}$, they join these new $2n$ points by following the edges of the graph $\Gamma(\pi, \rho)$.
This gives a meandric system $M(\pi,\rho)$ on $2n$ points.
}

\revision{This construction is a bijection (see \cite[Section $3$]{GouldenMeandric}), with the additional property that 
\begin{align*}
d_{\mathcal{H}_n}(\pi, \rho) = n - cc(M(\pi, \rho)).
\end{align*}
}

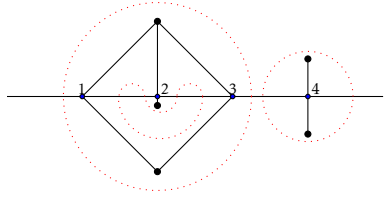
\begin{figure}
\begin{tikzpicture}
\draw (0, 0) -- (5,0) (1,0) -- (2,1) -- (3,0) -- (2,-1) -- (1,0) (2,1) -- (2,-.12) (4,.5) -- (4,-.5);
\draw[fill] (2,1) circle (.04);
\draw[fill] (2,-1) circle (.04);
\draw[fill] (2,-.12) circle (.04);
\draw[fill] (4,.5) circle (.04);
\draw[fill] (4,-.5) circle (.04);
\draw[fill=blue] (1,0) circle (.03);
\draw[fill=blue] (2,0) circle (.03);
\draw[fill=blue] (3,0) circle (.03);
\draw[fill=blue] (4,0) circle (.03);
\draw[red,dotted] (2.6,0) arc (0:180:.17);
\draw[red,dotted] (2.6,0) arc (0:-180:.56);
\draw[red,dotted] (2.26,0) arc (0:-180:.21);
\draw[red,dotted] (1.83,0) arc (0:180:.17);
\draw[red,dotted] (3.25,0) arc (0:360:1.25);
\draw[red,dotted] (4,0) circle (.6);
\node[scale=.5] at (1,.1) {$1$};
\node[scale=.5] at (2.1,.1) {$2$};
\node[scale=.5] at (3,.1) {$3$};
\node[scale=.5] at (4.1,.1) {$4$};
\end{tikzpicture}

\caption{An example of the bijection of \cite{GouldenMeandric}. Here, $n=4$, the partition $\pi$ (top) is $\{\{1,2,3\}, \{4\}\}$ and the partition $\rho$ (bottom) is $\{ \{1,3\}, \{2\}, \{4,\} \}$. The red dots represent the associated meander of size $4$.}
\label{fig:bijnc}
\end{figure}

\section{Erratum}

This is an erratum to our paper.
All statements there, and the general proof strategy, are correct,
but there is a missing argument in the proof of Proposition \ref{prop:CvToInfiniteNoodle}. Let us first recall its statement.

{\em Note:} as much as possible, we use boldface notation for random objets.

As in the previous version, we make use of Proposition \ref{prop:LocalCvDyckpath}. Applying this result to two independent pairs $(\bm w^a_n,\bm i^a_n)$
and $(\bm w^b_n,\bm i^b_n)$ give the following:
for any bounded continuous function $F: \mathcal W_* \times  \mathcal W_* \rightarrow \R_+$
\begin{equation}
\label{eq:what_we_have}
\E \left[ F ((\bm w^a_n,\bm i^a_n),(\bm w^b_n,\bm i^b_n)) \,  \middle|\,  \bm w^a_n, \bm w^b_n \right] \overset{\P}{\underset{n \rightarrow \infty}{\rightarrow}} \E \big[ F((\bm w^a_\infty, 0),(\bm w^b_\infty, 0))\big],
\end{equation}
where $\bm w^a_\infty$ and $\bm w^b_\infty$ are two independent copies
of a bi-infinite word with uniform independent letters taken in the 2-element set $\{L,R\}$.
Indeed, this follows directly from \eqref{eq:ToProve} for functions $F$
of the form $F((w_1,i_1),(w_2,i_2))=F_1(w_1,i_1) \cdot F_2(w_2,i_2)$
(called later {\em multiplicative functions})
and it is extended to all $F$ by a standard density argument.
Note that, since $F$ is bounded, moment convergence in \eqref{eq:what_we_have}
follows automatically from convergence in probability.

On the other hand, we recall
that meandric systems are in bijection with pairs of well-parenthesized words. Through this bijection, Proposition \ref{prop:CvToInfiniteNoodle} can be rewritten 
as follows: let $\bm w^a_n$ and $\bm w^b_n$ be two independent uniform well-parenthesized words
on $\{0,\dots,n-1\}$ and $\bm i_n$ be an independent random element in $\{0, \ldots, n-1\}$,
then, for any bounded continuous function $F: \mathcal W_* \times  \mathcal W_* \rightarrow \R_+$,
we have
\begin{equation}
\label{eq:to_prove}
\E \left[ F ((\bm w^a_n,\bm i_n),(\bm w^b_n,\bm i_n)) \, \middle| \, \bm w^a_n,\bm w^b_n \right] \overset{\P}{\underset{n \rightarrow \infty}{\rightarrow}} \E \big[ F((\bm w^a_\infty, 0),(\bm w^b_\infty, 0))\big]. 
\end{equation}
The difference between \cref{eq:what_we_have,eq:to_prove} is that, in the second case,
we use the same root $\bm i_n$ in both well-parenthesized words $\bm w^a_n$
and $\bm w^b_n$.
%
%\begin{definition}
%Let $w$ be a well-parenthesized word on $\{ 0, \ldots, n-1\}$ and $i^1, i^2$ elements of $\{ 0, \ldots, n-1 \}$ and $k$ be a positive integer.
%We let the permuted word $\tilde{w}(i^1, i^2;k)$ be the word obtained from $w$ by swapping the $k$-neighbourhoods of $i^1$ and $i^2$ in $w$ (seen as sequences of elements of $\{ L, R \}$). 
%If $|i_n^2-i_n^1| < k$ or if the word that we obtain is not well-parenthesized anymore,
%we set instead $\tilde{w}(i_n^1, i_n^2;k) = \dagger$.
%\end{definition}
%

Let us introduce some notation.
For a word $w$, an index $i$ in $\{ 0, \ldots, n-1\}$ and a radius $k>0$,
we denote by $(w,i)_k$ the restriction
of $w$ to the set $[i-k,i+k]$ of positions (replacing $i-k$ by $0$ if $i-k<0$, and $i+k$ by $n-1$
if $i+k>n-1$).
Moreover, we denote by $d_{TV}$ the total variation distance. 
We will need the following \enquote{swapping} lemma.

\begin{lemma}
\label{lem:additional}
Fix $k > 0$ and $\eta>0$.
We let $(i_n^1)_{n \geq 1},(i_n^2)_{n \geq 1}$ be two integer sequences with $|i_n^1-i_n^2| \ge 2k+1$
and $\eta n \le i_n^1,i_n^2 \le (1-\eta)n$ for all $n \geq 1$.
Moreover, for all $n$, let $\bm w_n$ be a uniform well-parenthesized word on $\{ 0, \ldots, n-1 \}$.
Then the following holds, uniformly in $i_n^1,i_n^2$:
\begin{equation}\label{eq:swap_simple}
d_{TV}\big( (\bm w_n,i_n^1)_k, (\bm w_n,i_n^2)_k \big)
 \underset{n \rightarrow \infty}{\longrightarrow} 0.
\end{equation}
More generally, let $i_n^0,i_n^1, i_n^2,i_n^3$ be four integer sequences with 
$\eta n \le i_n^0,i_n^1,i_n^2,i_n^3 \le (1-\eta)n$,
and such that, for every $n$, the minimal distance between $i_n^0,i_n^1, i_n^2,i_n^3$ is at least $2k+1$. Then we have
\begin{equation}\label{eq:swap_double}
d_{TV}\Big( \big( (\bm w_n,i_n^0)_k, (\bm w_n,i_n^1)_k \big) , \big((\bm w_n,i_n^2)_k, (\bm w_n,i_n^3)_k \big)\Big)
 \underset{n \rightarrow \infty}{\longrightarrow} 0.
\end{equation}
\end{lemma}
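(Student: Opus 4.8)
The plan is to use the encoding, recalled above, of a uniform well-parenthesized word $\bm w_n$ as a uniform nonnegative $\pm1$ path of length $n$ from $0$ to $0$ (set $S_0=0$ and read $L$ as a $+1$ step, $R$ as a $-1$ step), and to deduce both \eqref{eq:swap_simple} and \eqref{eq:swap_double} from a single \emph{pointwise‑in‑the‑bulk} local limit statement. Write $\nu$ for the uniform law on $\{L,R\}^{2k+1}$, i.e.\ the law of $2k+1$ i.i.d.\ balanced letters. The statement to establish is: for each fixed $m\ge1$ there is a sequence $\varepsilon_n\to0$ such that, for all $p_1<\dots<p_m$ in $\llbracket\eta n,(1-\eta)n\rrbracket$ with $p_{r+1}-p_r\ge 2k+1$,
\[ d_{TV}\Big(\big((\bm w_n,p_1)_k,\dots,(\bm w_n,p_m)_k\big),\ \nu^{\otimes m}\Big)\le\varepsilon_n . \]
Granting this, \eqref{eq:swap_simple} follows from the case $m=1$ (applied separately to $p_1=i_n^1$ and to $p_1=i_n^2$) and the triangle inequality, and \eqref{eq:swap_double} follows from the case $m=2$ applied to the pairs $(i_n^0,i_n^1)$ and $(i_n^2,i_n^3)$ (relabelled in increasing order; $\nu^{\otimes 2}$ is permutation invariant), again by the triangle inequality. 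Note that for $n$ large all positions in $\llbracket\eta n,(1-\eta)n\rrbracket$ lie well inside $\llbracket0,n-1\rrbracket$, so no boundary truncation occurs in $(\bm w_n,p_r)_k$, and $\varepsilon_n$ will not depend on the particular configuration $(p_r)$, which is what gives the uniformity required in the lemma.

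To prove the displayed bound, observe that the letters of the $r$‑th window are exactly the steps of $S$ between times $a_r:=p_r-k$ and $b_r:=p_r+k+1$, and that the separation hypothesis makes the intervals $[a_r,b_r]$ pairwise disjoint (or abutting). First, a ballot/reflection estimate shows that the endpoint heights are typically of order $\sqrt n$: for fixed $M$, $\P(S_p\le M)\le C_M\,n^{3/2}\big(p(n-p)\big)^{-3/2}$, which is $O(n^{-3/2})$ uniformly over $p\in\llbracket\eta n,(1-\eta)n\rrbracket$, so the event $G_n:=\{\min_r\min(S_{a_r},S_{b_r})>2k+1\}$ has probability tending to $1$, uniformly over the allowed configurations. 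On $G_n$ the nonnegativity constraint is inactive inside each window; and, conditionally on the heights $(S_{a_r},S_{b_r})_{1\le r\le m}$, the $m$ window segments are independent, each uniform over nonnegative $\pm1$ paths with the prescribed endpoints -- this is pure counting, the uniform measure on paths through fixed waypoints factoring as a product over the segments between consecutive waypoints. Hence on $G_n$ the $r$‑th window is uniform over $\{\sigma\in\{L,R\}^{2k+1}:\ \mathrm{imb}(\sigma)=S_{b_r}-S_{a_r}\}$, where $\mathrm{imb}(\sigma):=\#\{i:\sigma_i=L\}-\#\{i:\sigma_i=R\}$. It then remains to show that, conditionally on $G_n$, the displacement vector $(S_{b_r}-S_{a_r})_{1\le r\le m}$ converges in law to the vector of imbalances of $m$ independent $\nu$‑samples; combined with the factorization and with $\P(G_n)\to1$, this yields the displayed bound.

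This last step is where a genuine local limit input is needed, and it is precisely the reason the lemma does not follow formally from the \emph{averaged} convergence $\frac1n\sum_i\mathcal L\big((\bm w_n,i)_k\big)\to\nu$ implied by the results of Janson and Borga recalled above. Expanding the joint probability of prescribed endpoint heights in terms of ballot numbers $P_t(x\to y)$ -- the number of length‑$t$ nonnegative $\pm1$ paths from $x$ to $y$ -- one sees that on $G_n$ the conditional law of $(c_r)_r:=(S_{b_r}-S_{a_r})_r$ is proportional to $\prod_r\binom{2k+1}{(2k+1-c_r)/2}$ times ratios of the form $P_t(h+c\to\cdot)/P_t(h\to\cdot)$ arising from the segments between, before and after the windows. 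Each such ratio tends to $1$, uniformly for $c$ bounded and $h$ ranging over the bulk of the relevant conditional law, by the explicit ballot formula together with the local central limit theorem; and the binomial prefactor equals $2^{2k+1}$ times $\nu(\mathrm{imb}=c)$, which is exactly what one wants. Parities are consistent because $n$ is even, so that every $\sigma\in\{L,R\}^{2k+1}$ does extend to a full Dyck word of length $n$ and none of the probabilities involved vanishes for a parity reason. The main obstacle will be to carry out these ratio estimates with the required uniformity over $\llbracket\eta n,(1-\eta)n\rrbracket$; alternatively, one may quote a uniform‑in‑the‑bulk local limit theorem for Dyck paths, which yields simultaneously $\P(G_n)\to1$ and the convergence of the conditional displacement law, and then conclude as above.
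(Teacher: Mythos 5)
Your strategy is sound but genuinely different from --- and substantially heavier than --- the one in the paper. You aim for the stronger statement that the window vector converges in total variation, uniformly over well-separated bulk positions, to the product law $\nu^{\otimes m}$ on $(\{L,R\}^{2k+1})^m$, and deduce both claims by the triangle inequality; your decomposition into (a) waypoint heights exceeding $2k+1$ with high probability, (b) conditional independence and conditional uniformity of the windows given the waypoint heights, and (c) convergence of the displacement vector to a product of binomial imbalance laws is correct, and the ballot estimate $\P(S_p\le M)=O(n^{-3/2})$ uniformly in the bulk is right. The paper never identifies the limit law of a window at all: it defines the map $w\mapsto\tilde w$ swapping the two radius-$k$ neighbourhoods, observes that this map is injective and produces a well-parenthesized word whenever both heights $H(w,i_n^1)$ and $H(w,i_n^2)$ exceed $2k$ --- an event of probability tending to $1$ uniformly over the bulk, by the Brownian-excursion limit --- and concludes $d_{TV}(\bm w_n,\tilde{\bm w}_n)\to 0$ directly, whence \eqref{eq:swap_simple} by restriction to the first neighbourhood; \eqref{eq:swap_double} follows by swapping two pairs of neighbourhoods simultaneously. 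The swap argument therefore needs only the soft height estimate and entirely avoids your step (c), which is the one place your write-up asserts rather than proves: the uniform-in-the-bulk local limit theorem for the discrete excursion (equivalently, the ratio estimates $P_t(h+c\to\cdot)/P_t(h\to\cdot)\to 1$ with the required uniformity in $h$, $t$ and the window positions) is standard but is a genuine piece of work that you would need to carry out or cite precisely before the proof is complete. What your route buys in exchange is a stronger conclusion --- identification of the limit $\nu^{\otimes m}$ with uniformity for any fixed number of windows --- which the lemma does not actually require.
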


\begin{proof}
We first prove \eqref{eq:swap_simple}.
For any word $w_n$ of size $n$, let us define $\tilde{w}_n(i_n^1, i_n^2;k)$ the word obtained from $w_n$ by swapping the neighbourhoods of radius $k$ of $i_n^1$ and $i_n^2$ in $w_n$
 (these neighbourhoods are disjoint since $|i_n^1-i_n^2| \ge 2k+1$). 
It may happen that the word that we obtain is not well-parenthesized anymore,
in which case we set $\tilde{w}_n(i_n^1, i_n^2;k) = \dagger$ instead.
A sufficient condition for $\tilde{w}_n(i_n^1, i_n^2;k)$ to be a well-parenthesized word 
is that $H(w_n,i_n^1)> 2 k$ and $H(w_n,i_n^2)> 2 k$, where $H(w,i)$ is the number of $L$'s minus the number of $R$'s among the first $i$ letters of $w$
(this is the height of the associated Dyck path after step $i$). 
It is well known that $\frac{1}{\sqrt n}H(\bm w_n,\lceil tn \rceil)$
converges to a Brownian excursion (see, e.g., \cite{aldous1993}),
which implies in particular that, for any $\eta>0$,
\begin{align*}
\P \, \big(\inf \{H(\bm w_n, i), i \in [\eta n, (1-\eta)n] \} > 2 k \big) \underset{n \rightarrow \infty}{\rightarrow} 1.
\end{align*}
Hence $\tilde{\bm w}_n(i_n^1, i_n^2;k)$ is well-parenthesized
with probability tending to 1.
Since the application $w_n \mapsto \tilde w_n(i_n^1, i_n^2;k)$ is injective on the set 
where it yields a well-parenthesized word, we have
\[ d_{TV}\big( \bm w_n, \tilde{\bm w}_n(i_n^1, i_n^2;k) \big)  \underset{n \rightarrow \infty}{\longrightarrow} 0.\]
Hence, taking the restriction to $[i_n^1-k, i_n^1+k]$, we have
\[ d_{TV}\Big( (\bm w_n,i_n^1)_k, \big( \tilde{\bm w}_n(i_n^1, i_n^2;k) ,i_n^1 \big)_k \Big)
\underset{n \rightarrow \infty}{\longrightarrow} 0.\]
But,  conditionally on $\tilde{\bm w}_n(i_n^1, i_n^2;k) \neq \dagger$ (which happens with probability tending to $1$), 
we have $\big( \tilde{\bm w}_n(i_n^1, i_n^2;k) ,i_n^1 \big)_k  =  (\bm w_n,i_n^2)_k$.
This proves \eqref{eq:swap_simple}.
\cref{eq:swap_double} is proved in a similar way, swapping simultaneously
the neighbourhoods of $i_n^0$ and $i_n^2$, and that of $i_n^1$ and $i_n^3$.
\end{proof}

We can now prove Proposition \ref{prop:CvToInfiniteNoodle}.

\begin{proof}[Proof of Proposition \ref{prop:CvToInfiniteNoodle}]
  Fix $k>0$ and $\eta>0$. Let $\bm w^a_n$ and $\bm w^b_n$ be independent uniform well-parenthesized words on $\{0,\cdots,n-1\}$
and let $i^a$ and $i^b$ be elements in $[\eta n, (1-\eta)n]$
with $|i^a - i^b| \ge 2k+1$.
From \cref{eq:swap_simple} in \cref{lem:additional},
 we have, uniformly on $i^a$ and $i^b$,
  \[d_{TV}\left(  (\bm w^b_n,i^a)_k,
(\bm w^b_n,i^b)_k  \right)
 \underset{n \rightarrow \infty}{\longrightarrow} 0.\]
 Take now $\bm i^a_n$ and $\bm i^b_n$ uniformly at random in $\{0,\cdots,n-1\}$,
 independently from $\bm w^a_n$ and $\bm w^b_n$.
 The event that both belong to $[\eta n, (1-\eta)n]$ has probability $1-4\eta$, 
 and they satisfy $|\bm i^a_{n} - \bm i^b_{n}| \ge 2k+1$ with probability tending to $1$.
We get
\[\limsup_{n \to +\infty} d_{TV}\Big( \big( \bm i_n^a, (\bm w^b_n,\bm i_n^a)_k\big) ,
\big(\bm i_n^a,(\bm w^b_n,\bm i_n^b)_k \big) \Big) \le 4 \eta.\]
Since this is valid for all $\eta >0$, the limsup in the left-hand side is an actual limit and has value $0$.

Let $F_1$ and $F_2$ be bounded 
functions $\mathcal W_* \rightarrow \R_+$
depending only on a neighbourhood of radius $k$ of the marked points.
Since $\bm w^a_n$ is independent from $(\bm w^b_n,\bm i^a_n,\bm i^b_n)$,
the total variation bound above implies that
\[
\lim_{n \to +\infty} 
\left|\E \big[ F_1 (\bm w^a_n,\bm i^a_n) \cdot F_2(\bm w^b_n,\bm i^a_n) \big]  -
\E \big[   F_1 (\bm w^a_n,\bm i^a_n) \cdot F_2(\bm w^b_n,\bm i^b_n)  \big]\right| =0.
\]

Combining this with the convergence of the expectation in \cref{eq:what_we_have}
and renaming $\bm i^a_n$ as $\bm i_n$, we get 
\begin{equation}
\label{eq:cv_exp}
\lim_{n \to +\infty}  \E \big[   F_1 (\bm w^a_n,\bm i_n) \cdot F_2(\bm w^b_n,\bm i_n) \big]
= \E \big[ F_1(\bm w^a_\infty, 0) \cdot F_2 (\bm w^b_\infty, 0) \big]. 
\end{equation}
This proves the convergence in {\em expectation} in \cref{eq:to_prove}
for multiplicative functions $F$.

To prove the convergence in probability, we need a second moment argument.
We consider 
\[\E \Big[  \E \big[ F_1 (\bm w^a_n,\bm i_n) \cdot F_2(\bm w^b_n,\bm i_n) \, \big| 
\, \bm w^a_n, \bm w^b_n \big]^2 \Big] 
=  \E \big[  F_1 (\bm w^a_n,\bm i_n) \cdot F_2(\bm w^b_n,\bm i_n) \cdot
F_1 (\bm w^a_n,\bbi_n) \cdot F_2(\bm w^b_n,\bbi_n) \big],\]
where $\bm i_n$ and $\bbi_n$ are independent uniform random variables in $\{0,\dots,n-1\}$.
We introduce two extra random variables $\bm i^b_n$ and $\bbi^b_n$, uniform in $\{0,\dots,n-1\}$,
independent of each other and of the other previously introduced random variables.
A similar argument as above, using \cref{eq:swap_double} instead of  \cref{eq:swap_simple},
shows that
\[\lim_{n \to +\infty} d_{TV}\Big( 
\big(\bm i_n, \bbi_n, (\bm w^b_n,\bm i_n)_k (\bm w^b_n,\bbi_n)_k \big), 
 \big(\bm i_n, \bbi_n, (\bm w^b_n,\bm i^b_n)_k,(\bm w^b_n,\bbi^b_n)_k \big) 
  \Big) =0. \]
Since $\bm w^a_n$ is independent from $(\bm w^b_n,\bm i_n, \bbi_n,\bm i^b_n,\bbi^b_n)$,
this implies
 \begin{multline}
 \lim_{n \to +\infty} \bigg | \E \big[  F_1 (\bm w^a_n,\bm i_n) \cdot F_2(\bm w^b_n,\bm i_n) \cdot
F_1 (\bm w^a_n,\bbi_n) \cdot F_2(\bm w^b_n,\bbi_n)\big] \\
- \E \big[  F_1 (\bm w^a_n,\bm i_n) \cdot F_2(\bm w^b_n,\bm i^b_n) \cdot
F_1 (\bm w^a_n,\bbi_n) \cdot F_2(\bm w^b_n,\bbi^b_n) \big] \bigg | =0.
\end{multline}
The latter expectation is equal to
\[\E \Big[  \E \big[ F_1 (\bm w^a_n,\bm i_n) \cdot F_2(\bm w^b_n,\bm i^b_n) \, \big| 
\, \bm w^a_n, \bm w^b_n \big]^2 \Big] \]
and converges to $\E \big[ F_1(\bm w^a_\infty, 0) \cdot F_2 (\bm w^b_\infty, 0) \big]^2$
 by \eqref{eq:what_we_have}. 
Summing up we have
\[\lim_{n \to +\infty} \E \Big[  \E \big[ F_1 (\bm w^a_n,\bm i_n) \cdot F_2(\bm w^b_n,\bm i_n) \, \big| 
\, \bm w^a_n, \bm w^b_n \big]^2 \Big]
= \E \big[ F_1(\bm w^a_\infty, 0) \cdot F_2 (\bm w^b_\infty, 0) \big]^2.\]
Combining with \eqref{eq:cv_exp}, this shows \eqref{eq:to_prove} for multiplicative functions $F$
depending on a neighbourhood of radius $k$ of the marked points.
The extension to all continuous bounded functions follow by standard arguments, see e.g. the proof of Proposition \ref{prop:LocalCvDyckpath}.
This ends the proof of \cref{prop:CvToInfiniteNoodle}.
\end{proof}

\section*{Acknowledgement}
We are grateful to Svante Janson for pointing out
 that the proof in our original paper was incomplete.

%\bibliographystyle{bibli_perso}
%\bibliography{bibli}

%\begin{thebibliography}{1}
%\bibitem{feray2022meandric}
%  Féray, V. and Thévenin, P. (2022), \emph{Components in Meandric Systems and the Infinite Noodle}, International Mathematics Research Notices, doi:10.1093/imrn/rnac156.
%    
%\bibitem{aldous1993}
%Aldous, D. (1993), \emph{The Continuum Random Tree III}, Ann. Probab., 1, 248--289.
%\end{thebibliography}

\bibliographystyle{bibli_perso}
\bibliography{bibli}
\end{document}